\numberwithin{equation}{section} \setlength\textwidth{155mm}
\newcommand{\BR}{\mathbb{{R}}}
\newtheorem{theorem}{Theorem}[section]
\newtheorem{proposition}[theorem]{Proposition}
\newtheorem{corollary}[theorem]{Corollary}
\theoremstyle{rem}
\newtheorem{rem}[theorem]{Remark}
\newtheorem{lemma}[theorem]{Lemma}
\newtheorem*{thma}{Theorem A}
\newtheorem*{thmb}{Theorem B}
\newtheorem*{thmc}{Theorem C}
\theoremstyle{definition}
\newtheorem{definition}[theorem]{Definition}
\def \l{\lambda}
\def \e{\varepsilon}
\def \a{\alpha}
\def \d{\delta}
\def \D{\Delta}
\def \g{\gamma}
\def \t{\tau}
\def \s{\sigma}
\def \b{\beta}
\numberwithin{equation}{section}
\def\N{{{\Bbb N}}}
\def\Z{{{\Bbb Z}}}
\def\R{{\Bbb R}}
\def\l{{\lambda }}
\def\a{{\alpha }}
\def\D{{\Delta }}
\def\t{{\theta }}
\def\a{{\alpha}}
\def\b{{\beta}}
\def\d{{\delta}}
\def\e{{\varepsilon}}
\def\s{{\sigma}}
\def\g{{\gamma}}
\def\vp{{\varphi}}
\def\supp{\operatorname{supp}}
\def\){\right)}
\def\({\left(}
\begin{document}

\author{Yu. Kolomoitsev and E. Liflyand}

\title[Fourier integrals] {On weighted conditions for the absolute convergence of Fourier integrals}

\subjclass[2010]{Primary 42B10; Secondary 42B15}

\keywords{Fourier integral, absolute convergence, Gagliardo-Nirenberg inequality, Besov spaces, weight}

\address{Universit\"at zu L\"ubeck,
Institut f\"ur Mathematik,
Ratzeburger Allee 160,
23562 L\"ubeck}
\email{kolomoitsev@math.uni-luebeck.de, kolomus1@mail.ru}
\address{Department of Mathematics, Bar-Ilan University, 52900 Ramat-Gan, Israel}
\email{liflyand@math.biu.ac.il}

\begin{abstract}
In this paper we obtain new sufficient conditions for representation of a function as an
absolutely convergent Fourier integral. Unlike those known earlier,
these conditions are given in terms of belonging to
weighted spaces. Adding weights allows one to extend the range of
application of such results to Fourier multipliers with
unbounded  derivatives.
\end{abstract}

\maketitle

\section{Introduction}

If, for $n=1,2,....$
\begin{eqnarray}\label{belw}
f(x)=\int_{\BR^n}g(t)e^{i(x,t)}dt,\qquad g\in L_1(\BR^n),
\end{eqnarray}
where $(x,t)=x_1t_1+...+x_nt_n,$ we say that $f$ belongs to Wiener's algebra $W_0(\BR^n),$ written $f\in W_0(\BR^n),$
with $\|f\|_{W_0}=\|g\|_{L_1(\BR^n)}.$ Wiener's algebra is an important class of functions and its in-depth
study is motivated both by many points of interest in the topic itself and by its relations to other
areas of analysis, such as Fourier multipliers or comparison of differential operators. The history, motivations
and various conditions of belonging to Wiener's algebra are overviewed in detail in a recent survey paper \cite{LST}.

Of course, \cite{LST} summarized the long term studies of many mathematicians and gave a comprehensive picture of the subject.
However, these studies are continuing, see, e.g., \cite{K14}, \cite{KL}, \cite{Li},  \cite{LiOu}--\cite{LiTr1}, \cite{TrUMZ}. In these
works the undertaken efforts have mainly been aimed at obtaining conditions of mixed type, in the sense that conditions are posed
simultaneously on the function and its derivatives.

Naturally, certain such conditions were known earlier, see, e.g., \cite{B}, \cite{Lofstrom}. In the latter, the (multidimensional)
Riesz fractional differentiation is defined by
$$
(-\Delta)^ {\frac{\alpha}2} f =\mathscr{F}^{-1} |x|^\a
\mathscr{F}f,\qquad\alpha>0,
$$
where $\mathscr{F}$ means the Fourier operator, while
$\Delta=\sum_{j=1}^n\frac{\partial^2}{\partial x_j^2}$ denotes the
Laplace operator, and the result reads as follows (see~\cite{Lofstrom}).
\begin{thma}\label{Lo}
Let $f\in L_2(\mathbb R^n)$, and $(-\Delta)^ {\frac{\alpha}2} f\in
L_2(\mathbb R^n)$, $\alpha>\frac{n}2$, then $\mathscr{F}f\in
L_1(\mathbb R^n)$.
\end{thma}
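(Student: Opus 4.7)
The plan is to split the integral defining $\|\mathscr{F}f\|_{L_1(\mathbb R^n)}$ into the regions $\{|\xi|\le 1\}$ and $\{|\xi|>1\}$ and to handle each piece by Cauchy--Schwarz, using $f\in L_2$ on the low-frequency part and $(-\Delta)^{\alpha/2}f\in L_2$ on the high-frequency part. The role of the hypothesis $\alpha>n/2$ will be exactly to make the weight $|\xi|^{-2\alpha}$ integrable at infinity in dimension $n$.

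More precisely, first I would apply the Plancherel identity to rewrite the two hypotheses in frequency: $f\in L_2$ gives $\mathscr{F}f\in L_2(\mathbb R^n)$, and the definition of the fractional Laplacian recalled just before the statement gives $|\xi|^{\alpha}\mathscr{F}f(\xi)\in L_2(\mathbb R^n)$. On the low-frequency part I would estimate
\[
\int_{|\xi|\le 1}|\mathscr{F}f(\xi)|\,d\xi\le \Bigl(\int_{|\xi|\le 1}d\xi\Bigr)^{1/2}\Bigl(\int_{|\xi|\le 1}|\mathscr{F}f(\xi)|^2\,d\xi\Bigr)^{1/2},
\]
which is finite since the unit ball has finite measure and $\mathscr{F}f\in L_2$.

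On the high-frequency part I would insert the weight $|\xi|^{\alpha}$ and its reciprocal, applying Cauchy--Schwarz to get
\[
\int_{|\xi|>1}|\mathscr{F}f(\xi)|\,d\xi\le \Bigl(\int_{|\xi|>1}|\xi|^{-2\alpha}\,d\xi\Bigr)^{1/2}\Bigl(\int_{|\xi|>1}|\xi|^{2\alpha}|\mathscr{F}f(\xi)|^2\,d\xi\Bigr)^{1/2}.
\]
The second factor is controlled by $\|(-\Delta)^{\alpha/2}f\|_{L_2}$, and the first factor is finite precisely when $2\alpha>n$, i.e.\ when $\alpha>n/2$. Adding the two bounds gives $\mathscr{F}f\in L_1(\mathbb R^n)$.

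There is essentially no obstacle: the statement is a direct Sobolev-type embedding, so the only ``choice'' to be made is the dyadic splitting at $|\xi|=1$ and the observation that $\alpha>n/2$ is dictated by the integrability of $|\xi|^{-2\alpha}$ at infinity. A mild point to watch is to ensure the definition of $(-\Delta)^{\alpha/2}f$ on $L_2$ is understood in the distributional/Plancherel sense, so that the identity $\mathscr{F}[(-\Delta)^{\alpha/2}f](\xi)=|\xi|^{\alpha}\mathscr{F}f(\xi)$ may be used without further justification.
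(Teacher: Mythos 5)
Your argument is correct and complete: the splitting at $|\xi|=1$, Cauchy--Schwarz with the weight $|\xi|^{\pm\alpha}$ on the high-frequency part, and Plancherel to convert the two $L_2$ hypotheses into bounds on $\|\mathscr{F}f\|_{L_2}$ and $\||\xi|^{\alpha}\mathscr{F}f\|_{L_2}$ is exactly the standard proof of this embedding, and the condition $\alpha>\frac n2$ enters precisely where you say it does. Note that the paper itself gives no proof of Theorem~A --- it is quoted from L\"ofstr\"om's paper \cite{Lofstrom} as background motivation --- so there is no internal argument to compare against; your write-up supplies the expected classical justification.
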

To give further convenient formulations in the multivariate case, we need
additional notations. Let $\eta$ be an $n$-dimensional vector with
the entries either $0$ or $1$ only. Here and in what follows, $D^\eta
f$ for $\eta={\bf 0}=(0,0,...,0)$ or $\eta={\bf 1}=(1,1,...,1)$ mean
the function itself and the mixed derivative in each variable,
respectively, where
\begin{eqnarray*} D^\eta f(x)=\left(\prod\limits_{j:\, \eta_j=1}
\frac{\partial}{\partial x_j}\right)f(x).          \end{eqnarray*}

One of the multidimensional results we are going to generalize reads as follows (see~\cite{Samko}).

\begin{thmb} Let $f\in L_1(\mathbb R^n).$ If all the
mixed derivatives (in the distributional sense) $D^\eta f(x)\in
L_p(\mathbb R^n),$ $\eta\ne{\bf 0},$ where $1<p\le2,$ then $f\in W_0(\mathbb R^n).$
\end{thmb}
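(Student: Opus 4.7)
The plan is to estimate $\|\mathscr F f\|_{L_1(\mathbb R^n)}$ by partitioning the frequency space at the threshold $|t_j|=1$ in each coordinate. For each $\eta\in\{0,1\}^n$ set
\[
E_\eta=\{t\in\mathbb R^n:\ |t_j|\ge 1\text{ if }\eta_j=1,\ |t_j|<1\text{ if }\eta_j=0\};
\]
the $2^n$ product sets $E_\eta$ partition $\mathbb R^n$ up to a null set. On the bounded central box $E_{\mathbf 0}=[-1,1]^n$ the hypothesis $f\in L_1$ gives $\mathscr F f\in L_\infty$, and since $|E_{\mathbf 0}|=2^n$ one has $\mathscr F f\in L_1(E_{\mathbf 0})$ immediately.

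For each $\eta\ne\mathbf 0$ the key observation is that $|t^\eta|\ge 1$ on $E_\eta$. Applying the Hausdorff--Young inequality to $D^\eta f\in L_p(\mathbb R^n)$ (legitimate because $1<p\le 2$) yields $\mathscr F(D^\eta f)\in L_{p'}(\mathbb R^n)$, and the relation $\mathscr F(D^\eta f)(t)=(it)^\eta\mathscr F f(t)$ rewrites this as $t^\eta\mathscr F f\in L_{p'}(\mathbb R^n)$. H\"older's inequality then gives
\[
\int_{E_\eta}|\mathscr F f(t)|\,dt=\int_{E_\eta}\bigl|t^\eta\mathscr F f(t)\bigr|\cdot|t^\eta|^{-1}\,dt\le\bigl\|t^\eta\mathscr F f\bigr\|_{L_{p'}(\mathbb R^n)}\Bigl(\int_{E_\eta}|t^\eta|^{-p}\,dt\Bigr)^{1/p},
\]
and the last integral factorises over coordinates: for $\{j:\eta_j=1\}$ one integrates $|t_j|^{-p}$ over $\{|t_j|\ge 1\}$, each factor finite precisely because $p>1$, while the coordinates with $\eta_j=0$ contribute the finite Lebesgue measure of $[-1,1]$. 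Summing over the $2^n$ pieces yields $\mathscr F f\in L_1(\mathbb R^n)$, that is $f\in W_0(\mathbb R^n)$.

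The argument is quite direct; the step I expect to require the most care is the design of the partition itself. The weight $|t_j|^{-p}$ fails to be integrable near zero for any $p>1$, so each coordinate must be routed into the ``small'' or the ``large'' regime by hand, and the $2^n-1$ nontrivial mixed-derivative hypotheses are exactly what is required to cover the $2^n-1$ unbounded cells. If even one derivative were missing, the corresponding cell would escape control, showing that the full family $\{D^\eta f:\eta\ne\mathbf 0\}$ is genuinely used. Apart from this combinatorial bookkeeping, the proof reduces to one application each of Hausdorff--Young and H\"older on every cell.
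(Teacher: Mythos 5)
Your proof is correct. The paper states Theorem B without proof, citing Samko, so there is no internal argument to compare against; your route --- partitioning frequency space coordinate-wise at $|t_j|=1$, controlling the central box by $f\in L_1$, and on each unbounded cell $E_\eta$ applying Hausdorff--Young to $D^\eta f$ together with H\"older against the weight $|t^\eta|^{-1}$ (integrable there exactly because $p>1$) --- is the standard and complete proof of this statement.
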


However, the main motivation to our work was given by the following recent result \cite{Li}:
if $f\in L^p(\mathbb R)$, $1\le p<\infty,$ and $f'\in L^q(\mathbb R)$, $1<q<\infty$,
for $p$ and $q$ such that
\begin{eqnarray}
\label{pqcond}\frac1p+\frac1q>1,
\end{eqnarray}
then $f\in W_0(\mathbb R);$ and if $\frac1p+\frac1q<1$, then there
is a function $f$ such that $f\in L^p(\mathbb R)$, $f'\in
L^q(\mathbb R)$ and $f\not\in W_0(\mathbb R).$ Certainly, this result
essentially generalizes many previous results, e.g., the one in \cite{B}, but
we highlight it not only as a "landmark" but also since we shall pay
much attention to the case where $\frac1p+\frac1q=1$ and discuss why
it does not ensure the belonging to $W_0$, except a unique special
case $p=q=2$ (see the proof of Proposition~\ref{propWn2}). Despite the attraction of
(\ref{pqcond}), one could see already in \cite{Li} the
incompleteness of this condition. Indeed, it assumes not only the function to be
"good" but the derivative to be "good" as well. However,
related results on Fourier multipliers show that it is by no means
necessary. The model case is delivered by the well-known multiplier (see
\cite{Hir}, \cite[Ch.4, 7.4]{Stein}, \cite{Fef})
\begin{eqnarray}\label{must}
m(x)=m_{\alpha,\,\beta}(x)=\rho(x)\frac{e^{i|x|^\alpha}}
{|x|^\beta},
\end{eqnarray}
where $\rho$ is a $C^\infty$ function on $\mathbb R^n$, vanishing for $|x|\le 1$ and equal to $1$
if $|x|\ge 2$, with $\alpha, \beta>0$. Recall that the Fourier multipliers are defined as follows.
Let $m: \mathbb R^n\to \mathbb C$ be an almost everywhere bounded measurable function ($m\in
L_\infty(\mathbb R^n)$). Define on $L_2(\mathbb R^n)\cap L_p(\mathbb R^n)$
a linear operator $\Lambda$ by means of the following identity for
the Fourier transforms of functions $f\in L_2(\mathbb R^n)\cap L_p(\mathbb R^n)$:
\begin{eqnarray*} \mathscr{F}(\Lambda f)(x)=m(x)\mathscr{F}{f}(x).\end{eqnarray*}
If a constant $D>0$ exists such that for each $f\in L_2(\mathbb R^n)\cap L_p(\mathbb R^n)$ there holds
\begin{eqnarray*}
\|\Lambda f\|_{L_p(\mathbb R^n)}\le D\|f\|_{L_p(\mathbb R^n)},
\end{eqnarray*}
then $\Lambda$ is called a Fourier multiplier taking $L_p(\mathbb R^n)$ into
$L_p(\mathbb R^n).$ This is written as $m\in M_p$ and $\|m\|_{M_p}=\|\Lambda\|_{L_p\to L_p}.$
It is known (see~\cite{Mi_new}) that for $n\ge 1$ and $\a\neq 1$:
\begin{equation*}
{\rm if}
\qquad \frac{\beta}{\alpha}>\frac{n}{2},\qquad {\rm then}\quad m_{\alpha,\,\beta}\in
M_1 \quad ({\rm or}\quad m_{\alpha,\,\beta}\in M_\infty),
\end{equation*}
\begin{equation*}
{\rm if}\qquad \frac{\beta}{\alpha}\le\frac{n}{2},\qquad {\rm then}\quad m_{\alpha,\,\beta}\not\in M_1
\quad ({\rm or}\quad m_{\alpha,\,\beta}\not\in M_\infty).
\end{equation*}

\noindent The instance $\a\neq 1$ is also considered in~\cite{Mi_new}; more details for the case $\frac{\beta}{\alpha}=\frac{n}{2}$ can be found in \cite{Fef}, \cite{M}, and \cite{M2}.


To prove certain cases in these and related estimates, the following recent refinement of (\ref{pqcond}) (see \cite{K14})
is more convenient.

\begin{thmc}\label{CorKolIzv} Let $0< q\le\infty$, $1<r<\infty$, $s>\frac{n}r$, and
$f\in C_0(\R^n)$. Suppose that either $q=r=2$ or
\begin{equation}\label{eq.thmc}
\(1-\frac{n}{2s}\)\frac 1q+\(\frac{n}{2s}\)\frac 1r>\frac 12.
\end{equation}
If, in addition, $f\in L_{q}(\R^n)$ and $(-\D)^{\frac{s}2}f\in L_r(\R^n)$,
then $f\in W_0(\R^n)$.
\end{thmc}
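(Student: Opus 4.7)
The case $q=r=2$ falls under Theorem A, since the hypothesis $s>n/r$ then becomes $s>n/2$, so I focus on the case where (\ref{eq.thmc}) holds strictly. The strategy is a Littlewood--Paley argument reducing the claim to finiteness of the Besov-type sum $\sum_{k\in\Z}2^{kn/2}\|f_k\|_{L_2}$, where $f_k$ denotes the $k$-th Littlewood--Paley piece of $f$, so that $\hat f_k$ is supported in the annulus $\{|\xi|\sim 2^k\}$. Cauchy--Schwarz on this annulus combined with Plancherel gives $\|\hat f_k\|_{L_1}\lesssim 2^{kn/2}\|f_k\|_{L_2}$, so summing in $k$ would yield $\|\hat f\|_{L_1}$.

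At each scale I have two basic pointwise-in-$k$ estimates: the Littlewood--Paley projection bound $\|f_k\|_{L_q}\lesssim\|f\|_{L_q}$, and $\|f_k\|_{L_r}\lesssim 2^{-ks}\|(-\D)^{s/2}f\|_{L_r}$, the latter because $|\xi|^s\sim 2^{ks}$ on $\supp\hat f_k$. I would combine these with two further tools: Bernstein's inequality for frequency-localized functions, which says $\|f_k\|_{L_2}\lesssim 2^{kn(1/p-1/2)}\|f_k\|_{L_p}$ whenever $0<p\le 2$, and the H\"older interpolation $\|f_k\|_{L_2}\le\|f_k\|_{L_q}^{1-\theta}\|f_k\|_{L_r}^{\theta}$ whenever $2$ lies between $q$ and $r$, with $\tfrac{1}{2}=\tfrac{1-\theta}{q}+\tfrac{\theta}{r}$; this last bound is in the Gagliardo--Nirenberg spirit announced in the abstract.

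I would then split the series at $k=0$, estimating low frequencies via the $L_q$ side and high frequencies via the $L_r$ side. A preliminary observation is that (\ref{eq.thmc}) forces at least one of $q,r$ to be $\le 2$: if both exceeded $2$, the bounds $1/q,1/r<1/2$ together with the positivity of the coefficients $1-n/(2s)$ and $n/(2s)$ would yield $(1-n/(2s))/q+(n/(2s))/r<1/2$. So Bernstein is always available on at least one side, producing either $\sum_{k\le 0}2^{kn/q}<\infty$ (when $q\le 2$) or $\sum_{k\ge 0}2^{k(n/r-s)}<\infty$ (when $r\le 2$, using $s>n/r$). On the opposite side, where the relevant exponent may exceed $2$, inserting the two basic estimates into the H\"older bound produces $\|f_k\|_{L_2}\lesssim 2^{-ks\theta}\|f\|_{L_q}^{1-\theta}\|(-\D)^{s/2}f\|_{L_r}^\theta$, and a direct rearrangement of the defining equation for $\theta$ shows that the scaling condition $\theta>n/(2s)$ is equivalent to (\ref{eq.thmc}); this is exactly what is needed to sum the resulting geometric series.

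The main technical point is verifying that the interpolation exponent $\theta$ prescribed by the H\"older constraint satisfies $\theta>n/(2s)$ precisely when (\ref{eq.thmc}) holds, and that the two regimes (Bernstein on one side, H\"older on the other) jointly cover every admissible pair $(q,r)$ beyond the endpoint $q=r=2$. That endpoint corresponds to $\theta=n/(2s)$, where the interpolation summability barely fails, which is exactly why it must be treated separately via Theorem A.
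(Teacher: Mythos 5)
Your proposal is correct and follows essentially the same route the paper takes: Theorem C is recovered from the paper's Theorem~\ref{th1GN}, Theorem~\ref{th1w} and Corollary~\ref{corWn1} with $u=v\equiv 1$, whose proof is exactly your dyadic decomposition split at $k=0$, with a Nikolskii/Bernstein-type embedding on one half of the sum, H\"older interpolation of $L_p$-norms (the paper's Lemma~\ref{lem2}) on the other, and the embedding $C_0\cap\dot B_{2,1}^{n/2}\subset W_0$ obtained by Cauchy--Schwarz and Plancherel on each annulus. The one point to watch is that in the regime $r<2<q$ the summability requirement becomes $\t<\frac{n}{2s}$ rather than $\t>\frac{n}{2s}$ (H\"older then controls the low-frequency half and Bernstein the high-frequency half), but this again rearranges to (\ref{eq.thmc}) exactly as you indicate.
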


This theorem is sharp in the following sense: {\it Let $1<
q,r<\infty$, $s\in\N$, and $s>\frac{n}r$. If we have
\begin{equation}\label{prop3.eq0}
\(1-\frac{n}{2s}\)\frac 1q+\(\frac{n}{2s}\)\frac 1r<\frac 12
\end{equation}
instead of (\ref{eq.thmc}), then there is a function $f\in C_0(\R^n)$ such that $f\in
L_{q}(\R^n)$ and $(-\D)^{\frac{s}2}f\in L_r(\R^n)$, but $f\not\in W_0(\R^n)$.}

It is worth mentioning that we study the case of equality in (\ref{prop3.eq0})
as well (see Proposition~\ref{propWn2} below).

Theorem~C, with $s\in \mathbb{N}$ and $\beta>([\frac{n}2]+1)(\alpha-1)$,
allows one to prove the well-known fact about $m_{\alpha,\,\beta}$
(cf. above): \emph{if $\beta>\frac{n\alpha}{2}$, then
$m_{\alpha,\beta}\in W_0(\mathbb{R}^n)$}. Note that the restriction $\beta>([\frac{n}2]+1)(\alpha-1)$ appears since the
derivatives of $m_{\alpha,\beta}$ may be "bad", that is, unbounded.

A natural way out of such a situation is to advert to weighted
spaces, in which the weight $w$ is chosen so that the growth of the
derivatives is dampened by means of the decay of the weight
function. Our goal is to obtain multidimensional analogues of (\ref{pqcond}), or
more precisely of Theorem~C in the case of weighted spaces.
New one-dimensional results follow as immediate consequences
of our general ones in the case $n=1$.

To give an idea of our main
results, we formulate here a representative one, a particular case of Corollary~\ref{CorpropI}. In what follows

\begin{equation}\label{eqWeighP}
  w_\e(x)=\(1+|x|^2\)^{\frac{\e}2},\quad \e\in \R.
\end{equation}

\begin{proposition}\label{propI}
Let $0< q< 2<r<\infty$, $s>0$, $\e>0$, and let $f\in
C_0(\R^n)$. If {\rm (\ref{eq.thmc})} holds,
\begin{equation*}
fw_{\frac{\e}{(1-\g)q}}\in L_{q}(\R^n),\quad{and}\quad (-\D)^{\frac{s}2}\(f
w_{-\frac{\e}{\g r} }\)\in L_{r}(\R^n),\quad    \g=\frac{2-q}{r-q},
\end{equation*}
then $f\in W_0(\R^n)$.
\end{proposition}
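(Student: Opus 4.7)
The plan is to apply Theorem~C to an auxiliary function and then transfer the conclusion back to $f$. Abbreviate $\a:=\varepsilon/((1-\g)q)$ and $\b:=\varepsilon/(\g r)$, so the hypothesis reads $fw_\a\in L_q(\R^n)$ and $(-\D)^{s/2}(fw_{-\b})\in L_r(\R^n)$; note the balance $(1-\g)\a q=\g\b r=\varepsilon$.

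First I would set $v:=fw_{-\b}$. Since $w_{-\b}\le 1\le w_\a$ pointwise, $|v|\le|fw_\a|$, whence $v\in L_q(\R^n)$. Also $v\in C_0(\R^n)$ because $f\in C_0$ and $w_{-\b}$ is bounded, and $(-\D)^{s/2}v\in L_r(\R^n)$ by hypothesis. Moreover, condition $(\ref{eq.thmc})$ rearranges to $s>n(r-q)/(r(2-q))$, which exceeds $n/r$ since $r>2>q$. Theorem~C therefore applies and yields $v\in W_0(\R^n)$.

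The main step is to promote $v\in W_0$ to $f\in W_0$ using the remaining hypothesis $fw_\a\in L_q$. The naive identity $f=vw_\b$ does not directly close the argument, because $\mathscr{F}(w_\b)$ is only a tempered distribution (not $L_1$) for $\b>0$, so the convolution $\mathscr{F}f=\mathscr{F}v*\mathscr{F}(w_\b)$ need not be an $L_1$ function. Instead I would use a Gagliardo--Nirenberg-type interpolation: the choice $\g=(2-q)/(r-q)$ is exactly the exponent making $\g/q+(1-\g)/r=2/(qr)$, so that the two weighted hypotheses interpolate at $L_p$ with $p=qr/2$. A Littlewood--Paley dyadic decomposition of $\mathscr{F}f$ should then control low frequencies via the $L_q$-weighted bound on $f$ and high frequencies via $(-\D)^{s/2}v\in L_r$, the two estimates closing through a $\g$-weighted geometric mean and the embedding $B^{\sigma}_{p,1}\hookrightarrow W_0$ for $\sigma>n/p$.

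The main obstacle will be executing these dyadic estimates rigorously, especially when $q<1$ (where Hausdorff--Young is unavailable); this is presumably where the Besov-space machinery advertised in the keywords plays its central role.
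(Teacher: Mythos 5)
Your final paragraph does land on the skeleton of the paper's argument --- the paper proves Proposition~\ref{propI} as the case $u=w_\e$, $v=w_{-\e}$ of Corollary~\ref{CorpropI}, which is deduced from Corollary~\ref{corWn1} and ultimately from the Littlewood--Paley/weighted-H\"older machinery of Theorems~\ref{th1GN} and~\ref{th1w} --- but what you have written is a plan, not a proof, and two of its signposts point the wrong way. The detour through Theorem~C applied to $v=fw_{-\e/(\g r)}$ is correct but, as you concede, yields nothing about $f$. In the interpolation step, the identity $\g r+(1-\g)q=2$ for $\g=\frac{2-q}{r-q}$ means that the blockwise weighted H\"older inequality (Lemma~\ref{lem2}, usable here because $w_\e\cdot w_{-\e}=1$) lands each dyadic piece in $L_2$:
$$
\Vert \vp_k*f\Vert_{2}\le \Vert \vp_k*f\Vert_{L_q(w_{\e}^{1/(1-\g)})}^{\frac{q(1-\g)}2}\,\Vert \vp_k*f\Vert_{L_r(w_{-\e}^{1/\g})}^{\frac{r\g}2},
$$
and condition (\ref{eq.thmc}) is calibrated exactly so that the resulting geometric series converges, giving $f\in\dot B_{2,1}^{n/2}$ and then $f\in W_0(\R^n)$ via $C_0\cap\dot B_{2,1}^{n/2}\subset W_0$. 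Your target, $B_{p,1}^{\s}\hookrightarrow W_0$ with $p=qr/2$ and $\s>n/p$, is not a safe substitute: $qr/2$ may well exceed $2$, and for $p>2$ no such embedding holds (one cannot push a band-limited block from $L_p$ down to $L_2$), so aiming there would force different and generally stronger restrictions on $q,r,s$.

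The more serious omission is the passage from the hypotheses to blockwise estimates. The assumptions control $fw_{\e/((1-\g)q)}$ in $L_q$ and $(-\D)^{\frac s2}(fw_{-\e/(\g r)})$ in $L_r$, i.e.\ the weight sits \emph{inside} the derivative and outside any frequency localization; the dyadic scheme needs $\Vert\vp_k*f\Vert_{L_q(w_\e^{1/(1-\g)})}$ and $2^{sk}\Vert\vp_k*f\Vert_{L_r(w_{-\e}^{1/\g})}$, where the weight sits \emph{outside} the projection $\vp_k*{}$. Multiplication by a weight does not commute with convolution against $\vp_k$, and bridging this is precisely the role of Lemma~\ref{lemVesBes} (the isomorphism $f\mapsto w^{1/p}f$ of $B_{p,q}^s(w)$ onto $B_{p,q}^s$, valid because $w_{\pm\e}$ are admissible weights), combined with $B_{r,1}^s\subset H_r^s\subset B_{r,\infty}^s$ to convert the $(-\D)^{\frac s2}$ hypothesis into Besov language, and with a Nikol'skii inequality plus $w_\e\ge1$ to handle the low frequencies. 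None of this appears in your sketch, and it is where the proof actually lives.
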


Further, one of the prominent and classical results on the interrelation between the function and its derivatives is the
Gagliardo-Nirenberg inequality, which is of the form (in fact, of one of the various possible forms, see~\cite{He}, \cite{HMOW}, or~\cite{Trib})
\begin{equation}\label{eqGN0}
    \Vert (-\D)^{\frac{\tau}2} f\Vert_{L_p}\le C\Vert f\Vert_{L_q}^{1-\d}\Vert   (-\D)^{\frac{s}2} f\Vert_{L_r}^\d,
\end{equation}
where $1<p,q,r<\infty$, $\tau, s\in \R$, $0\le \d\le 1$, and
\begin{equation*}
    \frac np-\tau=(1-\d)\frac nq+\d\(\frac nr-s\),\quad \tau\le \delta    s.
\end{equation*}
It turned out that for our aims it was beneficial to obtain weighted generalizations of (\ref{eqGN0}),
with natural and immediate applications to the absolute convergence of Fourier integrals. For example,
one such generalization, with weighted function $w_\e(x)$, $\e>0$, is as follows:
\begin{equation*}
    \Vert (-\D)^{\frac{\tau}2} f\Vert_{L_p}\le C\left\Vert f w_\e^{\frac1{q(1-\g)}}\right\Vert_{L_q}^{1-\d}\left\Vert
    (-\D)^{\frac{s}2}(f w_{-\e}^{\frac1{\g r}}) \right\Vert_{L_r}^\d,
\end{equation*}
where $\d$ is as above, but  $1<q<p<r<\infty$, $\tau, s\in \R$,
$0<\d< 1$, $s-\frac{n}r\neq -\frac{n}q$, $\e>0$, $\g=\frac{p-q}{r-q}$, and $\tau<\d s$.
However, since much is contained in \cite{MV}, we omit this part in the present work.

As for the prospects, it is worth mentioning that possible
directions of our interest are similar results in terms of the function
and its derivatives belonging to other spaces, more balance between the radial and non-radial cases,
and different methods for establishing the sharpness of the obtained results.

The outline of the paper is as follows. Since there are quite many different notations and technical tools,
we feel it necessary and convenient to present them in the next section. In Section~\ref{main}
we give our main results. Then Section~\ref{aux} is devoted
to auxiliary results. In Section~\ref{prf} we prove our main result Theorem~\ref{th1w}. Further, in Section~\ref{sharp}
we discuss various corollaries and establish the sharpness of the obtained results, more precisely, of the most
convenient of them Corollary~\ref{corWn1}.

\bigskip
\bigskip

\section{Basic notations and definitions}\label{not}

Let $\BR^n$ be the $n$-dimensional Euclidean space with elements
$x=(x_1,\dots,x_n)$, $\xi=(\xi_1,\dots,\xi_n)$ endowed with the scalar product
$(x,\xi)=x_1\xi_1+\dots+x_n\xi_n$ and norm $|x|=(x,x)^{\frac 12}$. As usual, the space $L_p(\BR^n,w)$
consists of measurable functions $f(x)$, $x\in\BR^n$, for which
$$
\Vert f\Vert_{L_p(w)}=\Vert f\Vert_{L_p(\R^n,w)}=\bigg(\int_{\BR^n}|f(x)|^pw(x)\,dx\bigg)^\frac1p<\infty,\quad 0<p<\infty,
$$
and
$$
\Vert f\Vert_{L_\infty(w)}=\Vert f\Vert_{L_\infty(\R^n,w)}=\,\mathrel{\mathop{\mbox{\rm
ess\,sup}}_{x\in\BR^n}}|f(x)|w(x)<\infty.
$$
If $w_0(x)\equiv 1$, we shall write $\Vert f\Vert_{p}=\Vert
f\Vert_{L_p(w_0)}$ and $L_p=L_p(w_0)$.
By $C(\R^n)$, we denote the space of all bounded uniformly continuous functions on $\R^n$.
We will also deal with the following class of functions
$$
C_0(\R^n)=\left\{f\,:\, f\in C(\R^n),\quad
\lim_{|x|\to\infty}f(x)=0\right\}.
$$

We will use standard notations for the space of tempered distributions
$\mathscr{S}'(\R^n)$ and for the corresponding space of test functions $\mathscr{S}(\R^n)$.
For $f\in L_1(\R^n)$, we denote its Fourier transform in a standard manner
$$
\widehat{f}(\xi)=\mathscr{F}f(\xi)=\int_{\R^n}f(x)e^{-i(\xi,x)}dx,
$$
and set also $\mathscr{F}^{-1}f(\xi)=\mathscr{F}f(-\xi)$. In the sequel, we shall
understand the operators $\mathscr{F}$ and $\mathscr{F}^{-1}$ in the distributional sense.

In this paper, the main theorems are formulated in terms of functions from the weighted
Besov spaces $B_{p,q}^s(\R^n,w)$. To define these spaces, let us consider a function
$\vp\in \mathscr{S}(\R^n)$ such that $\supp\vp\subset
\{\xi\in\R^n\,:\,1/2\le |\xi|\le 2\}$, $\vp(\xi)>0$ for $1/2< |\xi|<2$ and
\begin{equation*}\label{VV.eqRazbEd}
    \sum_{k=-\infty}^\infty \vp(2^{-k}\xi)=1\quad\text{if}\quad \xi\neq0.
\end{equation*}
We also introduce the functions $\vp_k$ and $\psi$ by means of the relations
$$
\mathscr{F}\vp_k(\xi)=\vp(2^{-k}\xi)
\quad
\text{and}
\quad
\mathscr{F}\psi(\xi)=1-\sum_{k=1}^\infty \vp(2^{-k}\xi).
$$

We begin with usual Besov spaces. Let $s\in\R$, $0<p,q\le\infty$. We will say that $f\in\mathscr{S}'(\R^n)$
belongs to the (non-homogeneous) weighted Besov space $B_{p,q}^s(\R^n,w)=B_{p,q}^s(w)$, if
$$
\Vert f\Vert_{B_{p,q}^s(w)}=\Vert
\psi*f\Vert_{L_p(w)}+\bigg(\sum_{k=1}^\infty 2^{sqk} \Vert \vp_k*
f\Vert_{L_p(w)}^q\bigg)^\frac 1q<\infty,
$$
with standard modification for  $q=\infty$.
If $w_0(x)\equiv 1$, then $B_{p,q}^s(\R^n,w_0)=B_{p,q}^s$ are non-weighted Besov spaces.

Now, in order to define homogeneous Besov spaces, recall that

$$
\dot{\mathscr{S}}(\R^n)=\left\{\vp\in{\mathscr{S}}(\R^n)\,:\, (D^\nu
\widehat{\vp})(0)=0\,\,\,\text{for\ all}\,\,\,\nu\in\N^n\cup\{0\}\right\},
$$
where $\dot{\mathscr{S}}'(\R^n)$ is the space of all continuous functionals on $\dot{\mathscr{S}}(\R^n)$.
We will say that $f\in \dot{\mathscr{S}}'(\R^n)$ belongs to the homogeneous Besov space $\dot B_{p,q}^s$, if
$$
\Vert f\Vert_{\dot B_{p,q}^s}=\bigg(\sum_{k=-\infty}^\infty 2^{sqk}
\Vert \vp_k* f\Vert_p^q\bigg)^\frac 1q<\infty,
$$
with standard modification for $q=\infty$.

We shall also deal with weighted spaces of Bessel potentials $H_p^s(\R^n,w)=H_p^s(w)$. Let $1<p<\infty$ and $s\in\R$.
We will say that $f\in {\mathscr{S}}'(\R^n)$  belongs to the space $H_p^s(w)$  if
$$
\Vert f\Vert_{H_p^s(w)}=\Vert (I-\D)^\frac s2 f\Vert_{L_p(w)}<\infty.
$$

Now a discussion on weighted functions is in order. We shall deal with the so-called admissible
weights.

\begin{definition}\label{admwei} We will say that a measurable function $w:\R^n\mapsto
\R_+$ is an \emph{admissible weight}, written $w\in \mathcal{W}^n$, if

1) $w\in C^\infty(\R^n)$;

2) For each multi-index $\g$, there is a positive constant $c_\g$ such that
$$
\left|\frac{\partial^{\g_1+\dots \g_n}}{\partial x_1^{\g_1}\dots\partial x_n^{\g_n}} w(x)\right|\le c_\g w(x)\quad\text{for each}\quad x\in\R^n;
$$

3) There are constants $c>0$ and $\a\ge 0$ such that
$$
0<w(x)\le cw(y)(1+|x-y|^2)^{\frac{\a}2}\quad\text{for\ any}\quad x,y\in\R^n.
$$
\end{definition}
Note that if $w\in \mathcal{W}^n$ and $w'\in \mathcal{W}^n$, then $w^{-1}\in \mathcal{W}^n$ and $ww'\in \mathcal{W}^n$.
The functions
$$
w(x)=(1+|x|^2)^{\frac{\a}2},\quad v(x)=(1+\log(1+|x|^2))^\a,\quad \a\in\R,
$$
are typical examples of admissible weights.

In the sequel, we shall denote by $C$ (or $C$ with indicated parameters) absolute positive constants
(or constants depending only on the indicated parameters, respectively), while by $A$ certain specific
finite constants. We also set $\frac{\infty}{\infty}=1$ and $\frac{0}{0}=0$.

\bigskip

\section{Statement of the main results}\label{main}

The first result that we present cannot be called our main one only
because it does not lead explicitly to the belonging of the considered
function to Wiener's algebra. It is an important tool for us rather
than one of the intended goals. However, these Gagliardo-Nirenberg type
inequalities for homogeneous Besov spaces are of similar nature and
are of interest in their own right. Also, our main result that follows immediately
will be more transparent.

Let in what follows
$$
\g(p,q,r)=\left\{
         \begin{array}{ll}
           \displaystyle\frac{p-q}{r-q}, & \hbox{$q<p$;} \\
           \\
           \displaystyle\frac{q-p}{q-r}, & \hbox{$r<p$.}
         \end{array}
       \right.
$$

\begin{theorem}\label{th1GN}
Let either $0< q< p<r\le\infty$ or $0< r< p<q\le\infty$, and let
$\tau,\s,s\in\R$, $\s<s$, $0<\eta<\infty$. Suppose that $u$, $v$
are measurable functions on $\R^n$ such that
\begin{equation}\label{th1GN.eq0}
1\le u(x)v(x)\quad\text{for\ all}\quad x\in\R^n,
\end{equation}
and also
\begin{equation}\label{th1GN.eq1}
 \s-\tau<n\(\frac1q-\frac1p\)\quad \text{and}\quad   1\le u(x),\quad\text{if}\quad
 0<q<p
\end{equation}
and
\begin{equation}\label{th1GN.eq2}
  n\(\frac1r-\frac1p\)<s-\tau\quad \text{and}\quad  1\le v(x),\quad\text{if}\quad 0<r<p.
\end{equation}
If
\begin{equation}\label{th1GN.eq3}
\frac {1-\t}q+\frac \t r>\frac 1p\,,\quad \t=\frac{\tau-\s}{s-\s}\,,
\end{equation}
and
\begin{equation*}
f\in \dot{B}_{q,\infty}^\s\(u^{1/{(1-\g)}}\)\cap
\dot{B}_{r,\infty}^s\(v^{1/\g}\),\quad
    \g=\g(p,q,r),
\end{equation*}
then $f\in \dot{B}_{p,\eta}^\tau$. If, in addition, $s-\frac{n}r\neq
\s-\frac{n}q$, then
\begin{equation*}
    \Vert f\Vert_{\dot{B}_{p,\eta}^\tau}\le C\Vert f\Vert_{\dot{B}_{q,\infty}^\s\(u^{1/{(1-\g)}}\)}^{1-\d}\Vert
    f\Vert_{\dot{B}_{r,\infty}^s\(v^{1/\g}\)}^\d,
\end{equation*}
where $\d=\frac{\tau-\s+n(\frac1q-\frac1p)}{s-\s+n(\frac1q-\frac1r)}$ and $C$ is a
constant independent of $f$.
\end{theorem}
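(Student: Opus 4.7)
The plan is to follow the classical Littlewood--Paley scheme for Gagliardo--Nirenberg inequalities in Besov spaces, adapted to the weighted setting. Writing $f=\sum_{k\in\Z}\vp_k*f$, the homogeneous Besov quasi-norm $\|f\|_{\dot B^\tau_{p,\eta}}$ is equivalent to the $\ell^\eta(\Z)$-norm of $\bigl(2^{\tau k}\|\vp_k*f\|_p\bigr)_k$, so I only need to bound this sequence pointwise in $k$ by a geometric envelope summable in $\ell^\eta$.

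The block-level estimate will come from a weighted H\"older interpolation. Setting $U=u^{1/(1-\g)}$ and $V=v^{1/\g}$, the pointwise identity
\[
|g|^p=\bigl(|g|^q U\bigr)^{1-\g}\bigl(|g|^r V\bigr)^{\g}(uv)^{-1},
\]
the hypothesis $uv\ge 1$, and H\"older's inequality with exponents $1/(1-\g)$ and $1/\g$ (conjugate because $q(1-\g)+r\g=p$) deliver
\[
\|g\|_{L_p}\le\|g\|_{L_q(U)}^{q(1-\g)/p}\|g\|_{L_r(V)}^{r\g/p}.
\]
Running the same scheme at an auxiliary Lebesgue exponent $p_0$ with $1/p_0=(1-\t)/q+\t/r$ yields a block-level inequality with H\"older powers $1-\t$ and $\t$; the conditions (\ref{th1GN.eq1})--(\ref{th1GN.eq2}) enter here to guarantee that the residual weight factor remains $\ge 1$ in the two auxiliary cases $q<p$ and $r<p$, so that H\"older is still applicable.

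The second ingredient is a weighted Bernstein--Nikolskii-type inequality for entire functions of exponential type (applicable to $\vp_k*f$ by the frequency localization $|\xi|\sim 2^k$), which, together with admissibility of the weights, yields an explicit factor $2^{kn(1/p_0-1/p)}$ when passing from $L_{p_0}$ to $L_p$. Inserting the two Besov hypotheses $\|\vp_k*f\|_{L_q(U)}\le A_1 2^{-\s k}$ and $\|\vp_k*f\|_{L_r(V)}\le A_2 2^{-sk}$ and using $\tau=(1-\t)\s+\t s$ leaves an exponent of $2^k$ with a definite sign; by choosing $p_0$ on either side of $p$ (which is possible thanks to the strict inequality in (\ref{th1GN.eq3})) and taking the minimum of the two resulting pointwise estimates, I obtain a bound summable in both tails of $k\in\Z$. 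A standard $\ell^\eta$-splitting at a cut-off $k_0$ chosen in terms of $A_1/A_2$ then converts the two-sided sum into the advertised Gagliardo--Nirenberg-type inequality with exponent $\d$; the condition $s-n/r\neq\s-n/q$ rules out the degenerate case in which the two Bernstein slopes coincide and the geometric series would fail to converge.

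The main obstacle I anticipate is reconciling the two sets of exponents: the weighted H\"older step naturally produces $1-\t$ and $\t$, whereas the conclusion calls for $1-\d$ and $\d$, which differ by the dimensional terms $n(\tfrac1q-\tfrac1p)$ and $n(\tfrac1q-\tfrac1r)$. Bridging this gap forces the Bernstein--Nikolskii step to be executed carefully in weighted $L_p$ spaces (making full use of the admissibility of $u$ and $v$), while the hypotheses $uv\ge 1$ and $u,v\ge 1$ in the appropriate ranges must be kept simultaneously enforceable throughout the two auxiliary H\"older applications.
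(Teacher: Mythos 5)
Your overall architecture coincides with the paper's: Littlewood--Paley decomposition of $\Vert f\Vert_{\dot B^\tau_{p,\eta}}$, the weighted H\"older interpolation $\Vert g\Vert_{L_p}\le\Vert g\Vert_{L_q(u^{1/(1-\g)})}^{q(1-\g)/p}\Vert g\Vert_{L_r(v^{1/\g})}^{r\g/p}$ deduced from $uv\ge1$ (this is the paper's Lemma~\ref{lem2}), a Nikolskii-type gain on frequency blocks, geometric summation in $k$, and an optimization turning the sum estimate into the product inequality with exponent $\d$. (The paper performs that last optimization by dilating $f$, $u$, $v$ and using homogeneity of the Fourier transform rather than by choosing a cutoff $k_0$; the difference is immaterial.) Two steps of your plan as written, however, would not go through.

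First, ``choosing $p_0$ on either side of $p$'' is not available: for a block with spectrum in $|\xi|\sim2^k$ one can pass from $L_{p_0}$ to $L_p$ with the factor $2^{kn(1/p_0-1/p)}$ only when $p_0\le p$; there is no reverse Nikolskii inequality on $\R^n$. The realizable exponents $\kappa(p_0)=(\t-\t_0)(s-\s)+n\(\frac1{p_0}-\frac1p\)$ therefore range over $p_0\in[q,p]$ (say, in the case $q<p<r$); $\kappa$ is negative at $p_0=p$ precisely by \eqref{th1GN.eq3}, but its positivity at $p_0=q$ is precisely the numerical inequality $\s-\tau<n(\frac1q-\frac1p)$ in \eqref{th1GN.eq1}. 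So summability of the tail $k\to-\infty$ is \emph{not} a consequence of the strict inequality in \eqref{th1GN.eq3}, as you assert; it requires the first half of \eqref{th1GN.eq1} (resp.\ \eqref{th1GN.eq2}), which your plan invokes only for weight positivity and whose numerical content therefore goes unused --- without it the conclusion is false. This is exactly how the paper argues: for $k<0$ it uses only the unweighted embedding $\dot B^\s_{q,\infty}\subset\dot B^{\s-n(1/q-1/p)}_{p,\infty}$ together with \eqref{th1GN.eq1}, and for $k\ge0$ it interpolates at $p_0=p$ itself, where no Bernstein factor is needed. Second, you appeal to admissibility of $u$ and $v$ in a ``weighted Bernstein--Nikolskii'' step; admissibility is not assumed in Theorem~\ref{th1GN} (the weights are merely measurable) and is not needed, since the hypotheses $u\ge1$, $uv\ge1$ allow you to discard the weights \emph{before} any Nikolskii step, which is then purely unweighted. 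Once these two points are repaired, your argument reduces to the paper's proof.
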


The next theorem is our main result.
\begin{theorem}\label{th1w}
Let either $0< q< 2<r\le\infty$ or $0< r< 2<q\le\infty$, $\s<s$, and
let $f\in C_0(\R^n)$. Suppose that $u, v$ are measurable
functions on $\R^n$ such that {\rm (\ref{th1GN.eq0})} holds, while
{\rm (\ref{th1GN.eq1})}, {\rm (\ref{th1GN.eq2})}, and {\rm
(\ref{th1GN.eq3})} are valid for $\tau=\frac{n}2$ and $p=2$.

If

\begin{equation*}
f\in \dot{B}_{q,\infty}^\s\(u^{{1/(1-\g)}}\)\cap
\dot{B}_{r,\infty}^s\(v^{1/\g}\),\quad
    \g=\g(2,q,r),
\end{equation*}
then $f\in W_0(\R^n)$. If, in addition, $s-\frac{n}r\neq \s-\frac{n}q$, then
\begin{equation*}
    \Vert f\Vert_{W_0}\le C\Vert f\Vert_{\dot{B}_{q,\infty}^\s\(u^{1/(1-\g)}\)}^{1-\d}\Vert
    f\Vert_{\dot{B}_{r,\infty}^s\(v^{{1/\g}}\)}^\d,
\end{equation*}
where $\d=\frac{\frac{n}q-\s}{s-\s+n(\frac1q-\frac1r)}$ and $C$ is a constant
independent of $f$.
\end{theorem}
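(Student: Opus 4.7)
The plan is to derive Theorem~\ref{th1w} from the weighted Gagliardo--Nirenberg estimate of Theorem~\ref{th1GN} by specializing $p=2$, $\tau=\frac{n}{2}$, and then invoking the classical Bernstein--Plancherel embedding of $\dot{B}_{2,1}^{n/2}$ into Wiener's algebra. This reduces the problem to a standard one-line Littlewood--Paley estimate, with the role of $f\in C_0(\R^n)$ being to make the reconstruction of $f$ from its dyadic pieces unambiguous.

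First, I would apply Theorem~\ref{th1GN} with $p=2$, $\tau=\frac{n}{2}$, $\eta=1$, and the same $q,r,\s,s,u,v$ as in the present statement. All five assumptions (\ref{th1GN.eq0})--(\ref{th1GN.eq3}) are either carried over verbatim or amount to the hypothesis imposed here for $\tau=\frac{n}{2}$, $p=2$. The conclusion therefore yields $f\in\dot{B}_{2,1}^{n/2}$, and when $s-\frac{n}{r}\neq\s-\frac{n}{q}$ the quantitative bound
\[
\Vert f\Vert_{\dot{B}_{2,1}^{n/2}}\le C\,\Vert f\Vert_{\dot{B}_{q,\infty}^\s(u^{1/(1-\g)})}^{1-\d}\Vert f\Vert_{\dot{B}_{r,\infty}^s(v^{1/\g})}^\d.
\]
Substituting $\tau=\frac{n}{2}$, $p=2$ into the expression $\d=\frac{\tau-\s+n(1/q-1/p)}{s-\s+n(1/q-1/r)}$ from Theorem~\ref{th1GN} gives exactly $\d=\frac{n/q-\s}{s-\s+n(1/q-1/r)}$, which is the exponent in the present statement.

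Second, I would establish the embedding $C_0(\R^n)\cap\dot{B}_{2,1}^{n/2}\hookrightarrow W_0(\R^n)$ with $\Vert f\Vert_{W_0}\le C\Vert f\Vert_{\dot{B}_{2,1}^{n/2}}$. For each dyadic block $\vp_k*f$, which is spectrally localized in the shell $\{|\xi|\sim 2^k\}$ of measure $\sim 2^{kn}$, Cauchy--Schwarz together with Plancherel yields
\[
\Vert\widehat{\vp_k*f}\Vert_{L_1}\le C\,2^{kn/2}\Vert\widehat{\vp_k*f}\Vert_{L_2}=C\,2^{kn/2}\Vert\vp_k*f\Vert_{L_2}.
\]
Summing over $k\in\Z$ produces an $L_1$-convergent series whose sum represents $\widehat{f}$, so that chaining this bound with the first step gives both $f\in W_0(\R^n)$ and the stated norm inequality.

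The main technical obstacle is the gap between the homogeneous Besov norm, which is defined modulo polynomials, and the pointwise identity $f=\sum_k\vp_k*f$ needed to identify $f$ with the $L_1$-sum constructed above. This is where the hypothesis $f\in C_0(\R^n)$ is essential: it forces the polynomial ambiguity in the Littlewood--Paley reconstruction to be zero, and hence guarantees that the absolutely convergent Fourier representation we have obtained is genuinely a representation of $f$ itself, so that $f\in W_0(\R^n)$. Once this identification is in place, combining the two steps yields the full conclusion of Theorem~\ref{th1w}.
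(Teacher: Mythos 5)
Your proposal is correct and follows essentially the same route as the paper: the authors likewise specialize Theorem~\ref{th1GN} to $p=2$, $\tau=\frac n2$, $\eta=1$ and then invoke the classical embedding $C_0(\R^n)\cap\dot B_{2,1}^{n/2}\subset W_0(\R^n)$ with $\Vert f\Vert_{W_0}\le C\Vert f\Vert_{\dot B_{2,1}^{n/2}}$, which they simply cite (Besov, Peetre) rather than reprove. Your Cauchy--Schwarz/Plancherel sketch of that embedding and your remark on the role of $f\in C_0(\R^n)$ in resolving the polynomial ambiguity are both accurate and merely fill in what the paper leaves to the references.
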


As for the consequences, let us start with those for the weights from $\mathcal{W}^n$. The following statement is less general but presents more effective sufficient conditions. By this we pass from homogeneous spaces, we have worked so far, to non-homogeneous Sobolev type spaces. This allows one to apply the well known embedding theorems.

\begin{corollary}\label{corWn1}
Let either $0< q< 2<r<\infty$ or $1< r< 2<q\le \infty$, $s>0$, and
let $f\in C_0(\R^n)$. Suppose that $u, v\in \mathcal{W}^n$ are such that
{\rm (\ref{th1GN.eq0})} holds, while {\rm (\ref{th1GN.eq1})} and
{\rm (\ref{th1GN.eq2})} are valid for $\tau=\frac{n}2$, $\s=0$, and
$p=2$. If

\begin{equation}\label{corWn1.eq3}
\bigg(1-\frac{n}{2s}\bigg)\frac 1q+\bigg(\frac{n}{2s}\bigg)\frac 1
r>\frac 12
\end{equation}
and
\begin{equation*}
f\in L_{q}\(u^{{1/(1-\g)}}\)\cap H_{r}^s\(v^{1/\g}\),\quad
    \g=\g(2,q,r),
\end{equation*}
then $f\in W_0(\R^n)$. If, in addition, $s-\frac{n}r\neq
-\frac{n}q$, then
\begin{equation*}
    \Vert f\Vert_{W_0}\le C\Vert f\Vert_{L_q\(u^{{1/(1-\g)}}\)}^{1-\d}\Vert
    f\Vert_{H_r^s\(v^{{1/\g}}\)}^\d,
\end{equation*}
where $\d=\frac{\frac{n}q}{s+n(\frac1q-\frac1r)}$ and $C$ is a constant independent of $f$.
\end{corollary}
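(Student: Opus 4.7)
The plan is to derive Corollary~\ref{corWn1} as a specialization of Theorem~\ref{th1w} with the parameter choices $\sigma=0$, $\tau=\frac{n}{2}$, $p=2$. Under these choices the ratio $\theta$ in (\ref{th1GN.eq3}) becomes $\theta=\frac{n}{2s}$, so that (\ref{th1GN.eq3}) reduces to the hypothesis (\ref{corWn1.eq3}), and the exponent $\delta=\frac{\tau-\sigma+n(1/q-1/p)}{s-\sigma+n(1/q-1/r)}$ collapses to $\frac{n/q}{s+n(1/q-1/r)}$, matching the corollary. The assumption $f\in C_0(\R^n)$, the weight restrictions (\ref{th1GN.eq0})--(\ref{th1GN.eq2}), and the non-degeneracy $s-\frac{n}{r}\ne -\frac{n}{q}$ are inherited directly, so the parameter side of the argument is immediate.

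What remains is to bridge the function spaces. The corollary supplies membership in $L_q(u^{1/(1-\gamma)})\cap H_r^{s}(v^{1/\gamma})$, while Theorem~\ref{th1w} requires membership in $\dot{B}_{q,\infty}^{0}(u^{1/(1-\gamma)})\cap\dot{B}_{r,\infty}^{s}(v^{1/\gamma})$. The heart of the proof is therefore the pair of continuous embeddings
\begin{equation*}
L_q(w)\hookrightarrow \dot{B}_{q,\infty}^{0}(w),\qquad H_r^{s}(w)\hookrightarrow \dot{B}_{r,\infty}^{s}(w),
\end{equation*}
valid for admissible weights $w\in\mathcal{W}^n$ (both $u^{1/(1-\gamma)}$ and $v^{1/\gamma}$ qualify, since conditions 1--3 of Definition~\ref{admwei} are preserved under real powers). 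For the first embedding I would use a weighted Minkowski inequality combined with $\Vert\varphi_k\Vert_{L_1}\le C$ uniformly in $k$. For the second I would write $2^{sk}\varphi_k\ast f=K_k\ast(I-\Delta)^{s/2}f$ with $K_k=2^{sk}(I-\Delta)^{-s/2}\varphi_k$ and verify that its Fourier symbol $2^{sk}(1+|\xi|^2)^{-s/2}\varphi(2^{-k}\xi)$, after the substitution $\eta=2^{-k}\xi$, is a Schwartz bump with seminorms bounded uniformly in $k$, forcing $\Vert K_k\Vert_{L_1}\le C$ independently of $k$.

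The main technical obstacle is the low-frequency regime $k\to-\infty$: the Littlewood--Paley kernel $\varphi_k$ spreads out spatially, while admissibility only supplies the polynomial bound $w(x+z)\le c(1+|z|^2)^{\alpha/2}w(x)$. For the Bessel-potential embedding the prefactor $2^{sk}$ with $s>0$ compensates this spreading, so a direct convolution estimate suffices. For the pure $L_q$ embedding no such decay is available, and one must exploit the vanishing of $\mathcal{F}\varphi_k$ to infinite order at the origin---equivalently, the vanishing of every moment of $\varphi_k$---to extract enough cancellation; this is the standard Peetre-maximal-function maneuver for weighted Besov spaces in Schmeisser--Triebel and Bui's theory of admissibly weighted function spaces. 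Once both embeddings are installed, Theorem~\ref{th1w} applies directly and delivers $f\in W_0(\R^n)$ together with the stated norm estimate.
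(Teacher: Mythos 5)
Your reduction to Theorem~\ref{th1w} with $\s=0$, $\tau=\frac n2$, $p=2$, and the accompanying parameter arithmetic, coincide with the paper's route (the paper cites Lemma~\ref{lemVesBes}, Lemma~\ref{lemVlozh2} and Theorem~\ref{th1w}; you instead propose to prove the weighted embeddings directly by kernel estimates, which for the high-frequency half is an equivalent and perfectly sound alternative to stripping the weight off via the isomorphism of Lemma~\ref{lemVesBes}). The problem is the step you yourself flag as the main obstacle: the embedding $L_q(w)\hookrightarrow \dot B_{q,\infty}^0(w)$ is \emph{false} for admissible weights of sufficiently fast polynomial growth, and the cancellation argument you invoke cannot repair it. Concretely, take $n=1$, $q=1$, $w(x)=(1+|x|^2)^{1/2}$ and $f=\mathbf{1}_{[0,1]}$. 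For $k\to-\infty$ the kernel $\vp_k(x)=2^k\vp_0(2^kx)$ varies on scale $2^{-k}\gg 1$, so $\vp_k*f=\vp_k+O\bigl(2^k\sup_{|z-x|\le1}|2^k\vp_0'(2^kz)|\bigr)$, whence $\Vert\vp_k*f\Vert_{L_1(w)}\ge c\,2^{-k}-C\to\infty$; yet $f\in L_1(w)$. The vanishing of all moments of $\vp_k$ does not help here because the cancellation sits in the kernel, not in a generic $f\in L_q(w)$ (here $\widehat f(0)=1\ne0$), and the Peetre-maximal-function machinery addresses pointwise control of band-limited pieces, not this divergence. The same caveat applies to your claim that the prefactor $2^{sk}$ rescues $H_r^s(w)\hookrightarrow\dot B_{r,\infty}^s(w)$ at low frequencies: the convolution bound costs a factor $\Vert\vp_k(1+|\cdot|^2)^{\a/2}\Vert_{L_1}\sim 2^{-k\a}$ as $k\to-\infty$, so $s>0$ suffices only when $s$ exceeds the growth exponent $\a$ of the weight from Definition~\ref{admwei}(3).

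The correct repair is not more cancellation but the observation that the weighted control is never needed at low frequencies. In the proof of Theorem~\ref{th1GN} the low-frequency sum $S_2$ is estimated by the \emph{unweighted} norm $\Vert f\Vert_{\dot B_{q,\infty}^{\s}}$ alone (and then majorized by the weighted one using $u\ge1$); only the high-frequency sum $S_1$ uses the weights, and there the relevant kernel bounds $\sup_{k\ge0}\Vert\vp_k(1+|\cdot|^2)^{\a/2}\Vert_{L_1}<\infty$ are uniform. Since $u\ge1$ gives $f\in L_q\subset\dot B_{q,\infty}^0$ unweighted (for $q\ge1$), and your kernel estimates are valid for $k\ge0$, the corollary follows --- but by re-running the argument of Theorem~\ref{th1GN} with the homogeneous norm split at $k=0$, not by verifying the hypotheses of Theorem~\ref{th1w} verbatim, which your proposal (and, to be fair, the paper's one-line proof read literally) would require and which a general $f\in L_q(u^{1/(1-\g)})$ need not satisfy. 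You should therefore restate your two embeddings as one-sided bounds, $\sup_{k\ge0}\Vert\vp_k*f\Vert_{L_q(w)}\le C\Vert f\Vert_{L_q(w)}$ and $\sup_{k\ge0}2^{sk}\Vert\vp_k*f\Vert_{L_r(w)}\le C\Vert f\Vert_{H_r^s(w)}$, and couple them with the unweighted low-frequency estimate.
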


Now, we return, in a sense, to homogeneity of the spaces. The following corollary is a direct (weighted) extension of~\eqref{pqcond}.


\begin{corollary}\label{CorpropI}
Let  $0<q<2<r<\infty$, $s>0$, and
let $f\in C_0(\R^n)$. Suppose that $u, v\in \mathcal{W}^n$ are such that
$1\le u(x)v(x)$ and $1\le u(x)$. If~\eqref{corWn1.eq3} holds, and
\begin{equation*}
fu^{{1/q(1-\g)}}\in L_{q}(\R^n)\quad{and}\quad (-\D)^{\frac{s}2}\(f
v^{{1/r\g}}\)\in L_{r}(\R^n),\quad    \g=\frac{2-q}{r-q},
\end{equation*}
then $f\in W_0(\R^n)$.
\end{corollary}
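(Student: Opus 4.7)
My plan is to deduce the corollary from Theorem~\ref{th1w} taken with $p=2$, $\tau=n/2$, and $\sigma=0$. For $0<q<2<r<\infty$ one has $\gamma=(2-q)/(r-q)=\gamma(2,q,r)\in(0,1)$, and a routine check of Definition~\ref{admwei} (Leibniz for condition~2), rewriting for condition~3)) confirms that $\mathcal{W}^n$ is closed under positive real powers, so $u^{1/(1-\gamma)}$, $u^{1/(q(1-\gamma))}$, $v^{1/\gamma}$, and $v^{1/(r\gamma)}$ are all admissible weights.

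The reduction of the two hypotheses rests on the following auxiliary fact, which I will invoke from (or supply as a supplement to) Section~\ref{aux}: for any $w\in\mathcal{W}^n$, multiplication by $w^{1/p}$ produces the equivalences $\|f\|_{L_p(w)}=\|fw^{1/p}\|_{L_p}$ (tautological) and $\|f\|_{\dot{B}_{p,q}^s(w)}\sim \|fw^{1/p}\|_{\dot{B}_{p,q}^s}$ (substantive). The substance of the second equivalence is a weighted Littlewood--Paley commutation estimate exploiting the polynomial comparability $w(x)\le Cw(y)(1+|x-y|^2)^{\alpha/2}$ from condition~3), absorbed by the rapid decay of the Schwartz kernel $\varphi_k$.

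Granting that, the first hypothesis $fu^{1/(q(1-\gamma))}\in L_q(\R^n)$ merely restates $f\in L_q(u^{1/(1-\gamma)})$; combining the unweighted embedding $L_q\hookrightarrow \dot{B}_{q,\infty}^0$ (immediate from Young's inequality together with the scale-invariance $\|\varphi_k\|_{L_1}=\|\varphi_0\|_{L_1}$) with the weighted equivalence yields $f\in \dot{B}_{q,\infty}^0(u^{1/(1-\gamma)})$. For the second, I would set $g=fv^{1/(r\gamma)}$, so that the hypothesis reads $g\in\dot{H}_r^s$; the classical embedding $\dot{H}_r^s\hookrightarrow \dot{B}_{r,\infty}^s$ (valid for $1<r<\infty$) together with the weighted equivalence applied to $w=v^{1/\gamma}$ then delivers $f\in\dot{B}_{r,\infty}^s(v^{1/\gamma})$.

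It remains to verify the side conditions of Theorem~\ref{th1w} for the chosen parameters: (\ref{th1GN.eq0}) is the assumed $1\le uv$; (\ref{th1GN.eq1}) reduces to the automatic inequality $0<n/q$ and to the given $1\le u$; (\ref{th1GN.eq2}) is vacuous because $r>p=2$; and (\ref{th1GN.eq3}) becomes, with $\theta=(\tau-\sigma)/(s-\sigma)=n/(2s)$, exactly the assumed (\ref{corWn1.eq3}). Theorem~\ref{th1w} then yields $f\in W_0(\R^n)$. I expect the main obstacle to be the weighted Besov equivalence of the second paragraph, since it alone requires genuine harmonic-analytic work (commuting the multiplier $w^{1/p}$ with the $\varphi_k$-projectors while controlling polynomial tails via admissibility); the remaining steps are bookkeeping.
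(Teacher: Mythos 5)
Your derivation is correct and follows exactly the route the paper intends but leaves implicit (Corollary~\ref{CorpropI} is stated without proof): reduce to Theorem~\ref{th1w} with $\sigma=0$, $\tau=\frac n2$, $p=2$ by combining the unweighted embeddings $L_q\hookrightarrow\dot B_{q,\infty}^0$ and $\dot H_r^s\hookrightarrow\dot B_{r,\infty}^s$ with the weight-removal isomorphism of Lemma~\ref{lemVesBes} type, whose homogeneous variant --- the one step you rightly single out as substantive --- the paper does not reprove either but imports from \cite{ET}. Your bookkeeping of the side conditions {\rm(\ref{th1GN.eq0})}--{\rm(\ref{th1GN.eq3})} agrees with what the one-line proof of Corollary~\ref{corWn1} relies on, so nothing essential is missing relative to the paper's own argument.
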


Recall that if $f(x)=f_0(\rho)$, $\rho=|x|$, then using the well-known equality
$$
\Delta f(x)=\frac{\partial^2 f_0(\rho)}{\partial \rho^2}+\frac{n-1}{\rho}\frac{\partial f_0(\rho)}{\partial \rho},
$$
we get the following version of Corollary~\ref{CorpropI} for radial functions.

\begin{corollary}\label{lemrad}
Let  $0<q<2<r<\infty$ and
let $f\in C_0(\R^n)$. Suppose that $u, v\in \mathcal{W}^n$ are such that
$1\le u(x)v(x)$ and $1\le u(x)$, and, in addition, let $s$ be even and $f(x)=f_0(\rho)$, $u(x)=u_0(\rho)$, and $v(x)=v_0(\rho)$. If
\eqref{corWn1.eq3} holds, and
\begin{equation*}
f_0\in L_{q}\(\rho^{n-1}u_0^{{1/(1-\g)}}(\rho)\),\quad \frac1{\rho^{s-\nu}}\frac{\partial^\nu f_0}{\partial \rho^\nu}\in L_r\(\rho^{n-1}v_0^{1/\g}(\rho)\),\quad
    \g=\frac{2-q}{r-q},\quad \nu=1,\dots,s,
\end{equation*}
then $f\in W_0(\R^n)$.
\end{corollary}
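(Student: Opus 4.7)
The strategy is to apply Corollary~\ref{CorpropI} to the radial function $f$ and verify its two hypotheses using the radial structure. The first one is immediate: by Fubini in spherical coordinates,
\[
\int_{\R^n}|f(x)|^{q}u(x)^{1/(1-\g)}dx \;=\; |S^{n-1}|\int_0^{\infty}|f_0(\rho)|^{q}u_0(\rho)^{1/(1-\g)}\rho^{n-1}d\rho,
\]
so $fu^{1/(q(1-\g))}\in L_q(\R^n)$ is equivalent to the assumption $f_0\in L_q\bigl(\rho^{n-1}u_0^{1/(1-\g)}\bigr)$.

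For the second hypothesis I would first establish, by induction on $m=s/2$ starting from the radial Laplacian identity recalled in the statement, the representation
\[
(-\Delta)^{s/2}h(x)=\sum_{\nu=1}^{s}c_{\nu,s,n}\,\rho^{\nu-s}\,\frac{\partial^{\nu}h_0}{\partial\rho^{\nu}},\qquad h(x)=h_0(|x|),
\]
with constants $c_{\nu,s,n}$ depending only on $n,s,\nu$. The summation starts at $\nu=1$ because each factor $-\Delta$ is a differential operator of positive order, so every composition inherits this property. Apply this formula to the radial function $h=fv^{1/(r\g)}$, expand each $\partial_\rho^\nu(f_0v_0^{1/(r\g)})$ by the Leibniz rule, and invoke property~(2) of Definition~\ref{admwei} (together with the chain rule applied to the smooth positive function $v_0^{1/(r\g)}$) to obtain $|\partial_\rho^{k}v_0^{1/(r\g)}|\le C_k v_0^{1/(r\g)}$ for every $k\ge 0$. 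This yields a pointwise estimate
\[
\left|(-\Delta)^{s/2}h(x)\right|\le C\sum_{\nu=1}^{s}\sum_{k=0}^{\nu}\rho^{\nu-s}\left|\frac{\partial^{\nu-k}f_0}{\partial\rho^{\nu-k}}\right|v_0(\rho)^{1/(r\g)}.
\]
Converting $\|\cdot\|_{L_r(\R^n)}$ to $\|\cdot\|_{L_r(\rho^{n-1}d\rho)}$ via spherical coordinates, the diagonal terms $k=0$ are absorbed directly by the hypothesis $\rho^{\nu-s}\partial_\rho^{\nu}f_0\in L_r(\rho^{n-1}v_0^{1/\g})$.

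The main obstacle is handling the off-diagonal Leibniz contributions ($1\le k\le\nu$), where the prefactor $\rho^{\nu-s}$ carries too few negative powers compared with the hypothesis weight $\rho^{(\nu-k)-s}$. For $\rho\ge 1$ the inequality $\rho^{\nu-s}\le\rho^{(\nu-k)-s}$ makes these terms pointwise dominated by the diagonal ones, so the hypothesis absorbs them. Near the origin one has to exploit that a smooth radial function has only even Taylor coefficients, so that $\partial_\rho^{2j-1}f_0$ vanishes at $\rho=0$ and $\partial_\rho^{2j}f_0$ is bounded there; combined with the $\rho^{n-1}$ Jacobian and the local boundedness of the admissible weight $v_0^{1/(r\g)}$, this makes the remaining integrals near $\rho=0$ converge. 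Once these routine verifications are in place, both hypotheses of Corollary~\ref{CorpropI} hold and the conclusion $f\in W_0(\R^n)$ follows.
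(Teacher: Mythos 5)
Your overall strategy --- iterate the radial Laplacian identity to write $(-\Delta)^{s/2}$ of a radial function as $\sum_{\nu=1}^{s}c_{\nu,s,n}\rho^{\nu-s}\partial_\rho^{\nu}$ and then feed the hypotheses into Corollary~\ref{CorpropI} --- is exactly the paper's (the paper offers no more than this one-line reduction). The treatment of the first hypothesis and of the diagonal Leibniz terms is fine. The problem is in your handling of the off-diagonal Leibniz contributions, where the direction of your key inequality is reversed. For $1\le k\le\nu$ the term to control is $\rho^{\nu-s}\,|\partial_\rho^{\nu-k}f_0|\,v_0^{1/(r\g)}$, while the hypothesis controls $\rho^{(\nu-k)-s}\,|\partial_\rho^{\nu-k}f_0|$ in $L_r(\rho^{n-1}v_0^{1/\g})$; the ratio of the two prefactors is $\rho^{k}$. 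Hence the domination $\rho^{\nu-s}\le\rho^{(\nu-k)-s}$ that you invoke holds for $\rho\le 1$, \emph{not} for $\rho\ge 1$: near the origin the cross terms are absorbed by the diagonal ones, whereas at infinity they are not, and it is precisely the region $\rho\to\infty$ that your argument leaves uncontrolled. Concretely, for $\nu=s$, $k=s-1$ you would need $f_0'\,v_0^{1/(r\g)}\in L_r(\rho^{n-1}d\rho)$ near infinity, while the hypotheses only provide $\rho^{1-s}f_0'\in L_r(\rho^{n-1}v_0^{1/\g})$ --- a much weaker decay requirement. The bound $|\partial_\rho^{k}(v_0^{1/(r\g)})|\le C_k\,v_0^{1/(r\g)}$ from Definition~\ref{admwei}(2) is not strong enough to close this gap; what would close it is the stronger estimate $|\partial_\rho^{k}(v_0^{1/(r\g)})|\le C_k\,\rho^{-k}v_0^{1/(r\g)}$ for large $\rho$, which does hold for the power weights $w_\e$ actually used in the paper but is not guaranteed for a general admissible weight (e.g.\ $v(x)=2+\sin|x|$ is admissible and its derivatives do not decay).

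Two further points. First, the $k=\nu$ term involves $f_0$ with no derivative at all, multiplied by $\rho^{\nu-s}$ with $\nu-s\le -1$; the only hypothesis on $f_0$ itself is an $L_q$ condition, so this term is not covered by the hypothesis you cite, and near the origin its integrability is not automatic either. Second, your appeal to ``only even Taylor coefficients'' presupposes smoothness of $f$ at the origin, which is not among the assumptions ($f$ is merely in $C_0$ with the stated weak derivatives). To repair the proof you should either restrict the argument to weights whose radial derivatives gain a factor $\rho^{-k}$ (as $w_\e$ does), or reorganize the reduction so that the weight is not differentiated (e.g.\ by estimating $\Vert(-\D)^{s/2}(fv^{1/(r\g)})\Vert_{L_r}$ through $\Vert\cdot\Vert_{H_r^s(v^{1/\g})}$ via Lemma~\ref{lemVesBes} before expanding in $\rho$).
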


\begin{rem}
It is worth noting that Corollaries~\ref{corWn1}--\ref{lemrad}
hold true for
a wider class of weights. In particular, for the class $\mathcal{W}_e^n$
(see~\cite{ScII}), which differs from $\mathcal{W}^n$ in the way that it suffices to claim
in its definition the weaker condition
$$
0<w(x)\le C w(x-y)e^{d|y|},\quad x,y\in\R^n,
$$
where $C>0$ and $d>0$ are some fixed constants, in place of 3) in Definition \ref{admwei}.
For example, $\mathcal{W}^n_e$ contains functions like
$$
w(x)=e^{\pm |x|^\b},\quad 0<\b\le 1.
$$
\end{rem}

\bigskip

\section{Auxiliary results}\label{aux}

If the weight is admissible, the following lemma, which relates
usual and weighted Besov spaces, is of importance (see~\cite[p.156]{ET}).

\begin{lemma}\label{lemVesBes}
Let $0<p,q\le\infty$, $s\in\R$, and $w\in \mathcal{W}^n$. Then the operator
$f\mapsto w^{1/{p^*}} f$ ($p^*=p$, if $p<\infty$, and $p^*=1$ for
$p=\infty$) is an isomorphism $B_{p,q}^s \(w^{1/p}\)$ on
$B_{p,q}^s$. In particular, $\Vert f w^{1/{p^*}}\Vert_{B_{p,q}^s}$
is an equivalent quasinorm on $B_{p,q}^s(w)$.
\end{lemma}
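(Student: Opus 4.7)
The plan is to reduce the assertion to the ``In particular'' statement: once we know that $\|fw^{1/p^*}\|_{B_{p,q}^s}$ is a quasinorm equivalent to the weighted Besov quasinorm, the operator $f\mapsto w^{1/p^*}f$ is automatically an isomorphism, the inverse being multiplication by $w^{-1/p^*}$. This last step exploits $w\in\mathcal{W}^n\Rightarrow w^{-1}\in\mathcal{W}^n$ (noted after Definition~\ref{admwei}), so the same norm equivalence applied to $w^{-1}$ gives boundedness of the inverse. The real work is therefore to compare, block-by-block, $\|\varphi_k*(w^{1/p^*}f)\|_{L_p}$ with $\|\varphi_k*f\|$ in the appropriate weighted $L_p$.

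The starting point is the pointwise splitting
\begin{equation*}
  \varphi_k*(w^{1/p^*}f)(x)= w^{1/p^*}(x)(\varphi_k*f)(x) + R_k(x),
\end{equation*}
\begin{equation*}
  R_k(x)=\int_{\R^n}\varphi_k(x-y)\bigl[w^{1/p^*}(y)-w^{1/p^*}(x)\bigr]f(y)\,dy.
\end{equation*}
The leading term, after taking $L_p$-norms, matches the dyadic pieces of the weighted quasinorm (up to the exponent bookkeeping $w^{p/p^*}$ vs.\ $w$ in the norm convention). To handle $R_k$, I would Taylor expand $w^{1/p^*}$ about $x$ to order $N$, where $N$ is chosen larger than the smoothness $s$ and the admissibility exponent $\alpha$. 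By condition~2 of Definition~\ref{admwei}, the Taylor coefficients at $x$ are dominated pointwise by $w^{1/p^*}(x)$; and a monomial $(y-x)^\gamma$ paired against $\varphi_k(x-y)$ produces $2^{-k|\gamma|}$ times a convolution kernel with the same frequency localisation as $\varphi_k$. Condition~3 provides the polynomial bound on $w^{1/p^*}(y)/w^{1/p^*}(x)$ needed to control the Taylor remainder against the Schwartz decay of $\varphi_k$. The net outcome is a pointwise bound
\begin{equation*}
  |R_k(x)|\;\lesssim\; w^{1/p^*}(x)\Bigl[\sum_{j=1}^{N-1}2^{-kj}\bigl(\widetilde\varphi_{k,j}*f\bigr)^{*}(x)\;+\;2^{-kN}Mf(x)\Bigr],
\end{equation*}
where the star denotes a Peetre maximal function tied to the frequency annulus of $\varphi_k$ and $M$ is the Hardy--Littlewood maximal operator. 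Taking $L_p$-norms, using Peetre's inequality to revert the stars to genuine $\varphi_k*f$-quantities, and summing in $k$ with weights $2^{skq}$, the $R_k$-contribution is smaller than the main term by a factor $2^{-k}$ and is absorbed. This gives $\|w^{1/p^*}f\|_{B_{p,q}^s}\lesssim\|f\|_{B_{p,q}^s(w)}$; the reverse inequality follows by applying the same argument to $g=w^{1/p^*}f$ with weight $w^{-1}$. The low-frequency piece $\psi*f$ is treated analogously but uses condition~3 directly, since no dyadic cancellation is available.

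The main obstacle, I expect, is the bookkeeping in the Taylor step: choosing $N$ large enough to dominate both the smoothness parameter $s$ and the polynomial growth exponent $\alpha$ of the weight, uniformly across the full range $0<p,q\le\infty$, and handling the endpoints $p=\infty$ (where $p^*=1$) and $q=\infty$ via pointwise maximal-function reformulations. A second, related subtlety is verifying the Peetre maximal inequality with a Peetre exponent compatible with the admissibility exponent $\alpha$ and with the $L_p$-norm, particularly at the endpoints; this is classical but the constants must be tracked carefully to guarantee that the $2^{-k}$ gain genuinely beats the factor $2^{sk}$ in the $\ell^q$-sum.
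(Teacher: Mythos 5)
First, note that the paper does not prove this lemma at all: it is quoted verbatim from Edmunds--Triebel \cite[p.~156]{ET} (Theorem~4.2.2 there), so there is no in-paper argument to compare against. Your proposal is an attempt at a genuine proof, and its overall architecture --- reduce to the norm equivalence, split $\varphi_k*(w^{1/p^*}f)$ into a leading term $w^{1/p^*}(\varphi_k*f)$ plus a commutator, gain powers of $2^{-k}$ from the smoothness of the weight, control everything by weighted Peetre maximal functions --- is the standard one. The reduction of the isomorphism statement to the quasinorm equivalence via $w^{-1}\in\mathcal{W}^n$ is fine (you should also record that $w^{\kappa}\in\mathcal{W}^n$ for every real $\kappa$, which you use implicitly for $w^{1/p^*}$; this follows from conditions 2) and 3) by Fa\`a di Bruno).

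There is, however, a concrete gap in the commutator estimate: your Taylor remainder (and in fact already the first-order term, if one only uses the mean value theorem) is bounded by $2^{-kN}Mf(x)$, the Hardy--Littlewood maximal function of $f$ \emph{itself}. This quantity is not available in the present setting: $f$ is a priori only a tempered distribution in $B_{p,q}^s(w)$ and need not be locally integrable; $M$ is not bounded on $L_p$ for $p\le 1$, while the lemma covers the full range $0<p\le\infty$; and even for nice $f$, $\Vert w^{1/p^*}Mf\Vert_{L_p}$ is not controlled by the Besov quasinorm (take $s<0$). The remainder kernel $\varphi_k(x-y)\,r_N(x,y)$ is not frequency-localized in $y$, so it cannot be converted into Peetre maximal functions of the blocks $\varphi_j*f$ the way the polynomial terms can. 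The standard repairs are either (a) insert the Littlewood--Paley decomposition of $f$ \emph{before} commuting, i.e.\ estimate $\Vert\varphi_k*(w^{1/p^*}(\varphi_j*f))\Vert_{L_p}$ for each pair $(j,k)$ and obtain geometric decay in $|j-k|$ (an almost-diagonal/paraproduct argument, so that every term is a maximal function of some $\varphi_j*f$), or (b) switch to the local means characterization of $B_{p,q}^s$, which is what Edmunds--Triebel actually do: with compactly supported kernels at scale $2^{-k}$, condition 3) applies with $|x-y|\le c\,2^{-k}$ bounded, the weight moves in and out of the local mean with uniformly bounded constants, and no Taylor expansion or maximal function of $f$ is needed. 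Until the $Mf$ step is replaced by one of these devices, the proof does not close.
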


A similar lemma is valid for the spaces $H_p^s(\R^n, w)$, $w\in \mathcal{W}^n$.

\medskip

We shall make use of the embeddings for the Besov spaces, by presenting
the next result which states that the spaces $B_{p,q}^k$, $W_p^k$, and $C^k$  are related in the following well-known way.
\begin{lemma}\label{lemVlozh2}
Let $s>0$, $k\in\N\cup\{0\}$, and $1< p<\infty$. Then
$$
B_{p,1}^s\subset H_p^s\subset B_{p,\infty}^s
$$
and if $1\le p<\infty$, then
$$
B_{p,1}^k\subset W_p^k\subset B_{p,\infty}^k.
$$
\end{lemma}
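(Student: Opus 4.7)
The plan is to prove each of the four inclusions by means of a Littlewood--Paley decomposition $f=\psi*f+\sum_{j\ge 1}\varphi_j*f$, estimating each dyadic piece either by the Mikhlin multiplier theorem (for the fractional-order inclusions involving $H_p^s$, which is why $1<p<\infty$ is required) or by Bernstein-type inequalities relying only on Young's convolution inequality (for the integer-order inclusions involving $W_p^k$, which then permits $p=1$).

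First I would treat $B_{p,1}^s\subset H_p^s$. Subadditivity of the $L_p$-norm combined with the decomposition gives
\begin{equation*}
\|f\|_{H_p^s}\le \|(I-\D)^{s/2}(\psi*f)\|_{L_p}+\sum_{j\ge 1}\|(I-\D)^{s/2}(\varphi_j*f)\|_{L_p}.
\end{equation*}
Choose an auxiliary bump $\widetilde\varphi$ with the same support properties as $\varphi$ satisfying $\widetilde\varphi\,\varphi=\varphi$, so that $\varphi_j*f=\widetilde\varphi_j*\varphi_j*f$. Then $(I-\D)^{s/2}(\varphi_j*f)=M_j*(\varphi_j*f)$, where $\mathscr{F}M_j(\xi)=(1+|\xi|^2)^{s/2}\widetilde\varphi(2^{-j}\xi)$. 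Rescaling $\xi=2^j\eta$ shows $2^{-js}\mathscr{F}M_j(2^j\eta)=(2^{-2j}+|\eta|^2)^{s/2}\widetilde\varphi(\eta)$, a smooth family supported in a fixed annulus and bounded uniformly in every $C^N$-norm; it therefore satisfies Mikhlin's hypothesis with constants independent of $j$, giving $\|M_j*g\|_{L_p}\le C 2^{js}\|g\|_{L_p}$ for $1<p<\infty$. Summing, together with the easier low-frequency estimate, yields $\|f\|_{H_p^s}\le C\|f\|_{B_{p,1}^s}$. The reverse inclusion $H_p^s\subset B_{p,\infty}^s$ is obtained by the same device applied in reverse: write $\varphi_j*f=\widetilde{M}_j*(I-\D)^{s/2}f$ with Fourier symbol $(1+|\xi|^2)^{-s/2}\varphi(2^{-j}\xi)$, rescale, apply Mikhlin, and obtain $2^{js}\|\varphi_j*f\|_{L_p}\le C\|f\|_{H_p^s}$ uniformly in $j\ge 1$; the low-frequency piece $\|\psi*f\|_{L_p}\le C\|f\|_{H_p^s}$ is immediate because $\psi*(I-\D)^{-s/2}$ is convolution with a Schwartz, hence $L_1$, kernel.

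For the integer-order pair $B_{p,1}^k\subset W_p^k\subset B_{p,\infty}^k$ one replaces Mikhlin with Bernstein's inequality, which needs only Young's convolution inequality and hence applies also at $p=1$. For the first inclusion, $\|D^\alpha\widetilde\varphi_j\|_{L_1}=2^{j|\alpha|}\|D^\alpha\widetilde\varphi_0\|_{L_1}$, so Young yields $\|D^\alpha(\varphi_j*f)\|_{L_p}\le C 2^{jk}\|\varphi_j*f\|_{L_p}$ for every $|\alpha|\le k$; summing in $j$ gives $\|D^\alpha f\|_{L_p}\le C\|f\|_{B_{p,1}^k}$. For the reverse inclusion, since $\varphi$ vanishes near the origin one may write
\begin{equation*}
\varphi(2^{-j}\xi)=(-i)^k\sum_{|\alpha|=k}\binom{k}{\alpha}\frac{\xi^{\alpha}\varphi(2^{-j}\xi)}{|\xi|^{2k}}\,(i\xi)^{\alpha},
\end{equation*}
so that $\varphi_j*f=(-i)^k\sum_{|\alpha|=k}\binom{k}{\alpha}K_{j,\alpha}*D^{\alpha}f$; a direct scaling argument gives $\|K_{j,\alpha}\|_{L_1}\le C 2^{-jk}$, whence $2^{jk}\|\varphi_j*f\|_{L_p}\le C\|f\|_{W_p^k}$. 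The main technical obstacle is the scale-invariant application of Mikhlin, but after the rescaling $\xi=2^j\eta$ this is automatic since the rescaled symbols live on a fixed annulus and depend smoothly on $j$ through the tame parameter $2^{-j}\to 0$.
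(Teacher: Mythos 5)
Your proof is correct, but note that the paper does not actually prove this lemma at all: it states it as well known and refers the reader to Triebel's \emph{Theory of function spaces} and Peetre's \emph{New Thoughts on Besov Spaces}. What you have written out is essentially the standard argument found in those references -- Littlewood--Paley decomposition, an auxiliary bump $\widetilde\varphi$ with $\widetilde\varphi\,\varphi=\varphi$, rescaled symbols living on a fixed annulus for the Bessel-potential inclusions, and Bernstein/Young plus the multinomial identity $\sum_{|\alpha|=k}\binom{k}{\alpha}\xi^{2\alpha}=|\xi|^{2k}$ for the Sobolev inclusions -- and all the individual steps check out (the scaling $2^{-js}\mathscr{F}M_j(2^j\eta)=(2^{-2j}+|\eta|^2)^{s/2}\widetilde\varphi(\eta)$, the bound $\|K_{j,\alpha}\|_{L_1}\le C2^{-jk}$, the uniformity in $j$). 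One small remark: in the $H_p^s$ chain your multipliers $(1+|\xi|^2)^{\pm s/2}$ cut off by $\widetilde\varphi(2^{-j}\cdot)$ or $\varphi(2^{-j}\cdot)$ are compactly supported and smooth, so their kernels are already uniformly in $L_1$ after rescaling and Young's inequality suffices; invoking Mikhlin is harmless but heavier than needed, and the restriction $1<p<\infty$ in the statement is not actually forced by your argument (it is the standard hypothesis under which $H_p^s$ is a well-behaved Bessel-potential space, which is all the paper needs).
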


Here and in what follows, $\subset$ means a continuous embedding.
The proof of Lemma~\ref{lemVlozh2} can be found, e.g.,
in~\cite[Ch.2 and Ch.5]{TribF} and \cite[Ch.3 and Ch.11]{Pe}.
It readily follows from Lemma~\ref{lemVesBes} that the above-mentioned embeddings
are also valid for the weighted Besov spaces $B_{p,q}^s(w)$ provided, of course, that $w\in \mathcal{W}^n$.

\medskip

The following lemma is a simple corollary of H\"older's inequality.

\begin{lemma}\label{lem2}
Let $0<q<p<r\le\infty$, $\g=\frac{p-q}{r-q}$, and $u, v$ be
positive measurable functions on $\R^n$ such that $u(x)v(x)\ge 1,$ $x\in\R^n.$ Then
$$
\Vert f\Vert_p\le \Vert  f\Vert_{L_q\(u^{{1/(1-\g)}}\)}^{\frac{q(1-\g)}2}\Vert
f\Vert_{L_r\(v^{{1/\g}}\)}^{\frac{r\g}2}.
$$
\end{lemma}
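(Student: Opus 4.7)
\emph{Plan.} The proof will be a direct application of H\"older's inequality, driven by the algebraic identity
$$q(1-\g)+r\g=q+\g(r-q)=q+(p-q)=p,$$
which is forced by $\g=(p-q)/(r-q)$. Since $q<p<r$ one has $0<\g<1$, so the exponents $1/(1-\g)$ and $1/\g$ appearing below are conjugate H\"older exponents, both strictly larger than $1$.

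First I would use the pointwise hypothesis $u(x)v(x)\ge 1$ to insert the weights into $|f|^p$:
$$|f(x)|^p\,\le\, |f(x)|^p\, u(x)\,v(x)\,=\,\bigl(|f(x)|^{q(1-\g)}u(x)\bigr)\,\bigl(|f(x)|^{r\g}v(x)\bigr).$$
Integrating over $\R^n$ and applying H\"older's inequality with conjugate exponents $1/(1-\g)$ and $1/\g$ then yields
$$\int_{\R^n}|f|^p\,dx\,\le\,\Bigl(\int_{\R^n}|f|^q\, u^{1/(1-\g)}\,dx\Bigr)^{1-\g}\Bigl(\int_{\R^n}|f|^r\, v^{1/\g}\,dx\Bigr)^{\g},$$
and the right-hand side is precisely $\|f\|_{L_q(u^{1/(1-\g)})}^{q(1-\g)}\,\|f\|_{L_r(v^{1/\g})}^{r\g}$. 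Taking a square root produces the claimed bound; the halved exponents $q(1-\g)/2$ and $r\g/2$ in the conclusion correspond exactly to this square root, which is the natural normalization in the case $p=2$ relevant for the application of this lemma in the proof of Theorem~\ref{th1w}.

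Since the whole argument is essentially a one-line use of H\"older, there is no genuine obstacle. The only bookkeeping required is to verify $\bigl(|f|^{q(1-\g)}u\bigr)^{1/(1-\g)}=|f|^q u^{1/(1-\g)}$ together with its analogue for the $v$-factor, and to handle the endpoint $r=\infty$ (where $\g\to 0$) via the standard $L^\infty$-form of H\"older, in which the second integral is replaced by an essential supremum over $\R^n$ and the step $uv\ge 1$ is used in identical fashion.
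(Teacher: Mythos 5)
Your argument is exactly the paper's: the authors give no proof beyond calling the lemma ``a simple corollary of H\"older's inequality,'' and your insertion of $uv\ge 1$ followed by H\"older with conjugate exponents $1/(1-\g)$ and $1/\g$ is precisely that corollary, endpoint $r=\infty$ included. One correction to your final step, though: taking the square root of $\int|f|^p\,dx \le \Vert f\Vert_{L_q(u^{1/(1-\g)})}^{q(1-\g)}\Vert f\Vert_{L_r(v^{1/\g})}^{r\g}$ gives $\Vert f\Vert_p^{p/2}$, not $\Vert f\Vert_p$, on the left, so your computation yields the displayed inequality only when $p=2$; for general $p$ the correct conclusion is the $p$-th root, with exponents $q(1-\g)/p$ and $r\g/p$. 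Indeed, the inequality as printed cannot hold for $p\ne 2$: replacing $f$ by $cf$ scales the left side by $c$ and the right side by $c^{(q(1-\g)+r\g)/2}=c^{p/2}$, so homogeneity forces $p=2$. The ``$2$'' in the lemma's statement is thus evidently a typo for $p$ (or the statement is tacitly normalized for its use with $p=2$ in Theorem~\ref{th1w}); note that the proof of Theorem~\ref{th1GN} actually invokes the lemma in the $p$-th-root form, with exponents $1-\l$ and $\l$ where $\l=\frac{r(p-q)}{p(r-q)}=\frac{r\g}{p}$. Since you explicitly tied the halved exponents to the case $p=2$, this is a matter of stating the general-$p$ version cleanly rather than a conceptual gap in your argument.
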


\bigskip
\bigskip

\section{Proof of the main results}\label{prf}

\begin{proof}[Proof of Theorem~\ref{th1GN}]

In order to prove that $f\in\dot{B}_{p,\eta}^\tau$, it suffices to check that
\begin{equation}\label{pr4.eq1}
\sum_{k=-\infty}^\infty \(2^{\tau k}\Vert \vp_k*f\Vert_{L_p}\)^\eta
=\sum_{k=0}^\infty\,\,\,+\,\,\,\sum_{k=-\infty}^{-1}=S_1+S_2<\infty.
\end{equation}

Let first $0<q<p<r\le\infty$.

Using the embedding $\dot{B}_{q,\infty}^\s\subset \dot
B_{p,\infty}^{\s_1}$, where $\s_1=\s-n(\frac1q-\frac1p)$ (see, for
example,~\cite[6.5.1]{BL}), we obtain
\begin{equation}\label{pr4.eq3}
\begin{split}
    S_2&=\sum_{k=1}^\infty \(2^{-\tau k}\Vert \vp_{-k}*f\Vert_{L_p}\)^\eta\\
    &\le \sup_{j\in \Z_+}\(2^{-\s_1 j}\Vert
\vp_{-j}* f\Vert_{L_p}\)^\eta\sum_{k=1}^\infty 2^{-k(\tau-\s_1)\eta}\\
&\le C \Vert f\Vert_{\dot{B}_{p,\infty}^{\s_1}}^\eta\le C \Vert
f\Vert_{\dot{B}_{q,\infty}^{\s}}^\eta\le  C \Vert
f\Vert_{\dot{B}_{q,\infty}^{\s}(u^{{1/(1-\g)}})}^\eta.
\end{split}
\end{equation}

Let us proceed to the sum $S_1$. Denoting $\l=\frac{r(p-q)}{p(r-q)}$ and
applying Lemma~\ref{lem2}, we get
\begin{equation}\label{pr4.eq4}
    \begin{split}
 S_1&\le \sum_{k=0}^\infty \(2^{\tau k} \Vert
 \vp_k*f\Vert_{L_q(u^{{1/(1-\g)}})}^{1-\l}\Vert \vp_k*f\Vert_{L_r(v^{{1/\g}})}^{\l}\)^\eta\\
 &\le\bigg(\sup_{j\in\Z_+} 2^{\s j}\Vert
 \vp_j*f\Vert_{L_q(u^{{1/(1-\g)}})}\bigg)^{\eta(1-\l)}\\
&\times\bigg(\sup_{j\in\Z_+} 2^{s j}\Vert
 \vp_j*f\Vert_{L_r(v^{{1/\g}})}\bigg)^{\eta\l}\sum_{k=0}^\infty 2^{(\tau-(1-\l)\s-\l s)\eta k}\\
 &=\Vert f\Vert_{\dot{B}_{q,\infty}^\s(u^{{1/(1-\g)}})}^{(1-\l)\eta}\Vert f\Vert_{
 \dot{B}_{r,\infty}^s(v^{{1/\g}})}^{\l \eta}\sum_{k=0}^\infty
 2^{\frac{qr(s-\s)\eta}{r-q}(\frac1p-(1-\t)\frac1q-\t\frac1r)k}\\
 &\le C\Vert f\Vert_{\dot{B}_{q,\infty}^\s(u^{{1/(1-\g)}})}^{(1-\l)\eta}\Vert f\Vert_{
 \dot{B}_{r,\infty}^s(v^{{1/\g}})}^{\l \eta}.
    \end{split}
\end{equation}
Hence, combining (\ref{pr4.eq3}) and (\ref{pr4.eq4}), we derive from (\ref{pr4.eq1}) that
\begin{equation*}\label{pr4.eq4.5}
\Vert f\Vert_{\dot B_{p,\eta}^\tau}\le C\left\{\Vert
f\Vert_{\dot{B}_{q,\infty}^\s(u^{{1/(1-\g)}})}+\Vert
f\Vert_{\dot{B}_{q,\infty}^\s (u^{{1/(1-\g)}})}^{1-\l}\Vert
f\Vert_{\dot{B}_{r,\infty}^s(v^{{1/\g}})}^{\l}\right\}.
\end{equation*}
It remains to pass to the product inequality, that is, to substitute
$f\to f(\e \cdot)$ and $u\to u(\e \cdot)$, $v\to v(\e \cdot)$, where
$$
\e=\(\frac{\Vert f\Vert_{\dot{B}_{q,\infty}^\s(u^{{1/(1-\g)}})}}{
\Vert f\Vert_{\dot{B}_{r,\infty}^s(v^{{1/\g}})}
}\)^\frac1{s-\s+n(\frac1q-\frac1r)}
$$
and to
use the property of homogeneity for Fourier transform (see, for
example,~\cite[3.4.1]{TribF} and~\cite[7.2]{N}).

The case $0<r<p<q\le\infty$ is considered in a similar manner.

Theorem~\ref{th1GN} is proved.  \end{proof}

\begin{proof}[Proof of Theorem~\ref{th1w}]
It is well known (see, e.g.,~\cite{Bes},~\cite{P} or \cite[p.119]{Pe}) that if $f\in C_0\cap \dot B_{2,1}^\frac n2$, then $f\in  W_0(\R^n)$ and
$
    \Vert f\Vert_{W_0}\le C\Vert f\Vert_{\dot B_{2,1}^{\frac{n}2}},
$
where $C$ is a constant independent of $f$. Hence, the theorem
obviously follows from Theorem~\ref{th1GN}.
\end{proof}

\begin{proof}[Proof of Corollary~\ref{corWn1}]
This corollary can be easily obtained by using
Lemma~\ref{lemVesBes}, Lemma~\ref{lemVlozh2}, and Theorem~\ref{th1w} with $\s=0$.
\end{proof}


\section{Certain consequences and sharpness}\label{sharp}

We will show both applicability and sharpness of the obtained above results
by making use of the familiar power weight function $w_\e(x)=\(1+|x|^2\)^{\frac{\e}2}$.

We first show that Corollary~\ref{corWn1} can be applied to the
study of the function $m_{\a,\b}$ in the case where its derivatives are unbounded.

\begin{corollary}\label{corWm} Let $\a>0,\b>0$, and $\a\neq 1$. If
$\b>\frac{n\a}2$, then $m_{\a,\b}\in W_0(\mathbb R^n).$
\end{corollary}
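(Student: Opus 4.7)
The plan is to apply Corollary~\ref{corWn1} to $f = m_{\alpha,\beta}$ with a one-parameter family of power weights $u = w_a$, $v = w_{-a}$ (or their swap in the opposite regime of $\alpha$), chosen so that $u, v \in \mathcal{W}^n$ and $uv \equiv 1$. The starting point is the pointwise behaviour on $\{|x| \ge 2\}$, where $m_{\alpha,\beta}(x) = e^{i|x|^\alpha}/|x|^\beta$: induction on $|\nu|$ based on $\nabla e^{i|x|^\alpha} = i\alpha\, |x|^{\alpha-2} x\, e^{i|x|^\alpha}$ gives
$$
|D^\nu m_{\alpha,\beta}(x)| \le C_{|\nu|}\, |x|^{|\nu|(\alpha-1) - \beta},
$$
and Leibniz yields the analogous estimate for $D^\nu(m_{\alpha,\beta}\, w_c)$ for any $c \in \R$.

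For $\alpha > 1$ I would use the branch $0 < q < 2 < r < \infty$ of Corollary~\ref{corWn1}. The two weighted integrability hypotheses reduce (at infinity) to the one-sided bounds
$$
\frac{a}{1-\gamma} < q\beta - n
\qquad\text{and}\qquad
\frac{a}{\gamma} > n - r\beta + rs(\alpha-1).
$$
Using the identity $q(1-\gamma) + r\gamma = 2$, which follows directly from $\gamma = (2-q)/(r-q)$, these admit a common positive $a$ precisely when
$$
\beta > \frac{n}{2} + \frac{\gamma r s\,(\alpha-1)}{2},
\qquad\text{i.e.,}\qquad
\gamma r s < \frac{2\beta - n}{\alpha - 1}.
$$
A short computation (multiplying through by $2qrs$) shows on the other hand that the spectral condition (\ref{corWn1.eq3}) is equivalent to $\gamma r s > n$. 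Since the hypothesis $\beta > n\alpha/2$ is exactly $(2\beta - n)/(\alpha-1) > n$, the window for $\gamma r s$ is non-empty; filling it with, for instance, $q = 2 - \e$, $r = 2 + \e$, $s$ slightly above $n/r$ and $\e > 0$ small, yields a valid $a > 0$, and Corollary~\ref{corWn1} delivers $m_{\alpha,\beta} \in W_0(\R^n)$.

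For $\alpha < 1$ I would switch to the branch $1 < r < 2 < q \le \infty$ and swap the weights, $u = w_{-a}$, $v = w_a$ with $a > 0$, so that the requirement $v \ge 1$ is met. The same identity $q(1-\gamma) + r\gamma = 2$ (now with $\gamma = (q-2)/(q-r)$) produces the same critical inequality $\beta > n/2 + \gamma r s (\alpha-1)/2$; in this branch condition (\ref{corWn1.eq3}) is equivalent to $\gamma r s < n$, and the window $\gamma r s \in ((n - 2\beta)/(1-\alpha),\, n)$ is again non-empty once $\beta > n\alpha/2$ (both endpoints collapse to $n$ at the critical exponent). The main obstacle in either case is precisely this balancing step: all the freedom in $q, r, s, a$ collapses via $q(1-\gamma) + r\gamma = 2$ onto the single parameter $\gamma rs$, and one has to verify that $\beta > n\alpha/2$ leaves just enough slack for the windows imposed by (\ref{corWn1.eq3}) and by the weighted integrability of $m_{\alpha,\beta}$ and of its derivatives to overlap.
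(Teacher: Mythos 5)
Your overall strategy coincides with the paper's: both apply Corollary~\ref{corWn1} to $m_{\a,\b}$ with the reciprocal power weights $u=w_a$, $v=w_{-a}$, and your observation that $q(1-\g)+r\g=2$ collapses the two integrability constraints into the single inequality $2\b>n+\g rs(\a-1)$, while \eqref{corWn1.eq3} is exactly $\g rs>n$ (resp.\ $<n$ in the other branch), is a clean reformulation of the pair of inequalities (\ref{corWm.eq1}) and the subsequent display in the paper. The differences are in the bookkeeping: the paper works only in the branch $0<q<2<r<\infty$ with $s=n$ fixed, takes $q$ close to $1$ and $r$ large (so that $\g r\downarrow 1$ and $\g r n$ enters the window $(n,(2\b-n)/(\a-1))$), and disposes of the case $\b\le n$ by the unweighted sufficient condition; you instead keep $s$ free and split on $\a\gtrless 1$, using the second branch $1<r<2<q\le\infty$ with the weights swapped when $\a<1$. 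Both routes are legitimate, and your window analysis correctly identifies $\b>\frac{n\a}{2}$ as exactly the condition for the window to be non-empty.

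Two concrete corrections. First, your illustrative parameter choice fails: with $q=2-\e$, $r=2+\e$ one has $\g=\frac12$, so $s$ slightly above $\frac nr$ gives $\g rs$ slightly above $\frac n2<n$, violating \eqref{corWn1.eq3}; you need $s$ above $\frac{n}{\g r}=\frac{2n}{r}$ (or the paper's choice $s=n$ with $q\to1$, $r\to\infty$). This does not damage the argument, since the non-emptiness of the window $\g rs\in\bigl(n,\frac{2\b-n}{\a-1}\bigr)$ is what matters, but the "for instance" as written does not land in it. Second, to pass from your pointwise bounds $|D^\nu(m_{\a,\b}w_c)|\le C|x|^{|\nu|(\a-1)-\b+c}$ to membership in $H_r^s(v^{1/\g})$ you should take $s$ to be an integer (using $W_r^k=H_r^k$ for $1<r<\infty$), since for fractional $s$ the Bessel-potential condition is not read off directly from derivative estimates; one checks that an integer $s>\frac n2$ together with $q$ close to $2$ and a suitable $r$ still places $\g rs$ in the required window, which is why the paper's choice $s=n$ is convenient.
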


\begin{proof}
It suffices to consider only the case $\b>n$. Indeed, if $\b\le n$,
then Corollary~\ref{corWm} follows from the non-weighted sufficient
condition (see Corollary~\ref{corWn1}).

Observe first that condition $2\b>n\a$ allows one to choose $\e>0$ so that
$$
\b>n+\e\quad\text{and}\quad \b-n(\a-1)>-\e.
$$
We then are able to choose $q$, close enough to $1$, and large enough $r$ such that
\begin{equation}\label{corWm.eq1}
    \b-\frac{\e(r-q)}{q(r-2)}>\frac nq\quad\text{and}\quad
\b-n(\a-1)+\frac{\e(r-q)}{r(2-q)}>\frac nr,
\end{equation}
which, in turn, means that
$$
m_{\a,\b}\in L_q\(w_{\frac{\e}{1-\g}}\)\cap
H_r^n\(w_{-\frac{\e}{\g}}\),\quad \g=\frac{2-q}{r-q}.
$$
It follows from inequalities (\ref{corWm.eq1}) that
\begin{equation*}
    \bigg(\frac1q+\frac1r-1\bigg)\bigg(n+\frac{2\e(r-q)}{(2-q)(r-2)}\bigg)<2\b-n\a.
\end{equation*}
This allows us to choose, in addition, the parameters $q$ and $r$ so that
$$\frac1q+\frac1r>1.$$
Therefore, by Corollary~\ref{corWn1}, we obtain $m_{\a,\b}\in W_0(\R^n)$.
\end{proof}

The following notions and auxiliary statements are needed for
proving the sharpness of the obtained results.

Further, let us denote
$$
\mu_{\a,\b}(x)=\frac{m_{\a,\b}(x)}{\log |x|}
$$
and
$$
\nu_\a(x)=\mu_{\a,\frac{n\a}2}(x).
$$

\begin{lemma}\label{lem1}
Let $\b>0$.

(i) If  $0\le \a<1$, then
$$
|\mathscr{F}\mu_{\a,\b}(\xi)|\sim {|\xi|^{-\frac{n-\b-\frac{\a n}2}{1-\a}}}(\log
\frac1{|\xi|})^{-1},\quad |\xi|\to 0.
$$

(ii) If $\a>1$, then
$$
|\mathscr{F}\mu_{\a,\b}(\xi)|\sim {|\xi|^{-\frac{n-\b-\a n/2}{1-\a}}}(\log
|\xi|)^{-1},\quad |\xi|\to \infty.
$$
\end{lemma}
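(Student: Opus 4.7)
The plan is a standard stationary-phase analysis, complicated only by the slowly varying $1/\log r$ amplitude. Since $\mu_{\alpha,\beta}$ is radial and essentially supported in $\{|x|\ge 1\}$, its Fourier transform admits the Hankel representation
\[
\mathscr{F}\mu_{\alpha,\beta}(\xi)=c_n|\xi|^{1-n/2}\int_1^\infty \frac{\rho(r)\,e^{ir^\alpha}}{r^\beta\log r}\,J_{n/2-1}(r|\xi|)\,r^{n/2}\,dr.
\]
Inserting the large-argument asymptotic $J_\nu(s)=s^{-1/2}(a_+e^{is}+a_-e^{-is})+O(s^{-3/2})$ and discarding the region where $r|\xi|$ remains bounded (a routine estimate), the problem reduces, up to a negligible remainder, to the two one-dimensional oscillatory integrals
\[
I_\pm(\xi)=|\xi|^{-(n-1)/2}\int_1^\infty \frac{\rho(r)\,r^{(n-1)/2-\beta}}{\log r}\,e^{i\phi_\pm(r)}\,dr,\qquad \phi_\pm(r)=r^\alpha\pm r|\xi|.
\]

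Next I would observe that $\phi_+'(r)=\alpha r^{\alpha-1}+|\xi|$ has no zero on $(1,\infty)$, so a single integration by parts already shows that $I_+(\xi)$ is of strictly smaller order than the claimed main term in either regime. The phase $\phi_-(r)=r^\alpha-r|\xi|$, by contrast, has a unique critical point $r_*=(|\xi|/\alpha)^{1/(\alpha-1)}$ satisfying $r_*\to\infty$ both in case (i) ($\alpha<1$, $|\xi|\to 0$) and in case (ii) ($\alpha>1$, $|\xi|\to\infty$); in particular $\rho(r_*)=1$ and $\log r_*\sim|\alpha-1|^{-1}|\log|\xi||$. Applying the one-dimensional stationary-phase formula with $|\phi_-''(r_*)|=\alpha|\alpha-1|\,r_*^{\alpha-2}$ and amplitude $r_*^{(n-1)/2-\beta}/\log r_*$ at the critical point gives
\[
|I_-(\xi)|\sim|\xi|^{-(n-1)/2}\cdot\frac{r_*^{(n+1-\alpha)/2-\beta}}{\log r_*},
\]
and substituting $r_*\sim|\xi|^{1/(\alpha-1)}$ simplifies the $|\xi|$-exponent to $-(n-\beta-\alpha n/2)/(1-\alpha)$, while the logarithm becomes $(\log(1/|\xi|))^{-1}$ in case (i) and $(\log|\xi|)^{-1}$ in case (ii), as claimed.

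The main obstacle I expect is making the stationary-phase estimate \emph{rigorous in the presence of the non-compactly supported $1/\log r$ amplitude}, where routine remainder bounds would prefer a uniformly decaying amplitude. The standard remedy is to localize around $r_*$ by a smooth cutoff of width comparable to $r_*^{(2-\alpha)/2}$ (the classical stationary zone): inside this zone the quadratic-phase approximation applies with controllable error, while the two tails are handled by repeated integration by parts using the lower bound $|\phi_-'(r)|=\alpha|r^{\alpha-1}-r_*^{\alpha-1}|$, which is comparable to a definite power of $|r-r_*|$ on each tail. Since $(1/\log r)^{(k)}=O(r^{-k}/\log r)$, the derivatives of the amplitude fit harmlessly into these bounds, so only the leading power of $r_*$ and the single factor $1/\log r_*$ survive, yielding exactly the asserted asymptotic. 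A secondary point is to justify exchanging the Hankel expansion with integration, which is straightforward because the $O(s^{-3/2})$ remainder already gives an absolutely convergent integral after the $r^{(n-1)/2-\beta}/\log r$ weight is taken into account.
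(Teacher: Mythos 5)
The paper itself contains no proof of this lemma: it refers the case $0<\beta+\frac n2<\frac12$ to \cite[Lemma~4]{M} and says the general case follows by modifying Lemmas 2.6 and 2.7 of \cite{M2}. Your proposal reconstructs exactly the stationary-phase argument that underlies those references, and your main-term bookkeeping is correct: with $r_*\sim|\xi|^{1/(\alpha-1)}$ the exponent $-\frac{n-1}{2}+\frac{1}{\alpha-1}\bigl(\frac{n+1-\alpha}{2}-\beta\bigr)$ does simplify to $-\frac{n-\beta-\alpha n/2}{1-\alpha}$, and $\log r_*\sim|\alpha-1|^{-1}\,|\log|\xi||$ yields the correct logarithmic factor in both regimes. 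So in substance this is the intended proof.

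The places you label ``routine'' are, however, exactly where the work lies, and two of your claims are not correct as stated. First, the $O(s^{-3/2})$ Bessel remainder does \emph{not} give an absolutely convergent integral unless $\beta>\frac{n-1}{2}$; and even when it does, the crude bound it produces (of order $|\xi|^{-(n+1)/2}$ in case (ii), and of order $|\xi|^{\beta-n}$ in case (i) after restricting to $r|\xi|\gtrsim1$) is \emph{not} automatically smaller than the stationary-phase main term — e.g.\ for $\beta=\frac{n\alpha}{2}$, $\alpha>1$, the main term is $|\xi|^{-n}(\log|\xi|)^{-1}$, which is smaller than $|\xi|^{-(n+1)/2}$ for $n\ge2$. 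One must therefore carry the full expansion $J_\nu(s)=s^{-1/2}\bigl(e^{is}P_N(1/s)+e^{-is}Q_N(1/s)\bigr)+O(s^{-1/2-N})$ to sufficiently high order, treat each extra term as another oscillatory integral with the same phases (its stationary contribution is the main term times $(r_*|\xi|)^{-k}\sim r_*^{-k\alpha}\to0$), and genuinely estimate the low-frequency region $r|\xi|\lesssim1$ and compare its contribution with the main term; in case (i) this comparison uses $\beta<\frac n2$-type information rather than being free. Second, a single integration by parts on $I_+$ does not suffice: for $\alpha<1$ the only lower bound on $\phi_+'$ near $r_*$ is $|\xi|$ itself, and one must iterate, gaining a factor $(r|\xi|)^{-1}\sim r_*^{-\alpha}$ per step, to beat the stationary term; relatedly, for small $\beta$ the Hankel integral is only conditionally convergent and needs a regularization before any integration by parts is legitimate. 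Finally, case (i) formally includes $\alpha=0$, where $\phi_-$ has no critical point at all and the asymptotics come from the quasi-homogeneity of $|x|^{-\beta}/\log|x|$, requiring a separate argument. All of this is repairable by standard techniques (and is what Miyachi's cited lemmas actually do), but as written the error analysis is the gap.
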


In the case $0<\b+\frac{n}2<\frac12$, see the proof of this lemma in~\cite[Lemma~4]{M}. The
general case can be proved by using (modifying) the proofs of Lemmas 2.6 and 2.7 in~\cite{M2}.

We now show that Corollary~\ref{corWn1} is sharp. First, the next proposition holds true.

\begin{proposition}\label{propWn2}
Let either $1\le q< 2<r< \infty$ or $1\le r< 2<q< \infty$, $s>\frac{n}2$, $s\in\N$. If
\begin{equation}\label{propWn2.eq1}
\bigg(1-\frac{n}{2s}\bigg)\frac 1q+\bigg(\frac{n}{2s}\bigg)\frac 1r\le \frac 12,
\end{equation}
then for each $\e>0$ there is a function $f\in C_0(\R^n)$ such that either
\begin{equation*}
f\in L_q\(w_{\frac{\e}{1-\g}}\)\cap
H_r^s\(w_{-\frac{\e}{\g}}\),\quad
    \g=           \frac{2-q}{r-q}
\end{equation*}
or
\begin{equation*}
f\in L_q\(w_{-\frac{\e}{1-\g}}\)\cap
H_r^s\(w_{\frac{\e}{\g}}\),\quad
    \g=           \frac{q-2}{q-r},
\end{equation*}
but $f\not\in W_0(\R^n)$.
\end{proposition}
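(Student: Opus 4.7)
The plan is to construct an explicit counterexample using the model function
\[
    \nu_\alpha(x)=\mu_{\alpha,n\alpha/2}(x)=\frac{\rho(x)e^{i|x|^\alpha}}{|x|^{n\alpha/2}\log|x|}
\]
with an appropriately chosen parameter $\alpha$, exploiting the precise asymptotics of Lemma~\ref{lem1}. I would treat the first range $1\le q<2<r<\infty$ with $\alpha>1$ in detail; the case $1\le r<2<q<\infty$ is handled by the mirror construction with $0<\alpha<1$ using Lemma~\ref{lem1}(i).

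First I would verify $\nu_\alpha\notin W_0(\R^n)$. With $\beta=n\alpha/2$ and $\alpha>1$, the exponent in Lemma~\ref{lem1}(ii) simplifies to $(n-\beta-\alpha n/2)/(1-\alpha)=n$, so
\[
|\mathscr{F}\nu_\alpha(\xi)|\sim|\xi|^{-n}(\log|\xi|)^{-1},\qquad|\xi|\to\infty.
\]
Passing to polar coordinates, the integral reduces to $\int_2^\infty(\rho\log\rho)^{-1}d\rho=\infty$, so $\mathscr{F}\nu_\alpha\notin L_1(\R^n)$.

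Next I would bound the weighted norms. The pointwise estimate $|\nu_\alpha(x)|\le C|x|^{-n\alpha/2}(\log|x|)^{-1}$ is immediate, and iterated differentiation of the phase $e^{i|x|^\alpha}$ gives $|D^\eta\nu_\alpha(x)|\le C_\eta|x|^{|\eta|(\alpha-1)-n\alpha/2}(\log|x|)^{-1}$ for $|x|\ge 2$. Since $s\in\N$ and $r>1$, Lemma~\ref{lemVesBes} and Lemma~\ref{lemVlozh2} yield $\|f\|_{H_r^s(w)}\asymp\sum_{|\eta|\le s}\|D^\eta f\|_{L_r(w)}$ for admissible $w$, so the weighted norms reduce to radial power integrals. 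A direct computation shows
\[
    \nu_\alpha\in L_q(w_{\varepsilon/(1-\gamma)})\iff\alpha\ge\alpha_-(\varepsilon):=\frac{2}{q}+\frac{2\varepsilon}{nq(1-\gamma)},
\]
with equality admissible when $q>1$ thanks to the extra $(\log|x|)^{-q}$ factor, and
\[
    \nu_\alpha\in H_r^s(w_{-\varepsilon/\gamma})\iff\alpha\le\alpha_+(\varepsilon):=\frac{s-n/r+\varepsilon/(r\gamma)}{s-n/2},
\]
with equality admissible since $r>1$.

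The algebraic core is to verify that condition (\ref{propWn2.eq1}) is exactly $\alpha_-(\varepsilon)\le\alpha_+(\varepsilon)$. A short manipulation at $\varepsilon=0$ shows $\alpha_-(0)\le\alpha_+(0)$ is equivalent to $(1-n/(2s))/q+n/(2sr)\le 1/2$, namely (\ref{propWn2.eq1}). Moreover, in the equality case of (\ref{propWn2.eq1}) one has $q=r(2s-n)/(sr-n)$; substituting into the slopes of the affine functions $\alpha_\pm(\varepsilon)$ yields $\alpha_-'(\varepsilon)=\alpha_+'(\varepsilon)$, hence $\alpha_-(\varepsilon)\equiv\alpha_+(\varepsilon)$. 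In either scenario the window $[\alpha_-(\varepsilon),\alpha_+(\varepsilon)]$ is non-empty for every $\varepsilon>0$; picking any $\alpha$ in it and setting $f=\nu_\alpha\in C_0(\R^n)$ produces the desired counterexample.

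The main obstacle I expect is the corner sub-case $q=1$ with equality in (\ref{propWn2.eq1}): here $\alpha_-\equiv\alpha_+$ but the boundary $L_1$-integral reduces to $\int r^{-1}(\log r)^{-1}dr=\infty$, so a single logarithmic correction in $\nu_\alpha$ no longer secures it; a refined test function (for instance a slight perturbation of $\beta$ away from $n\alpha/2$, whose Fourier transform is still non-integrable by Lemma~\ref{lem1}) is required. For the second range $1\le r<2<q<\infty$ one takes $0<\alpha<1$: Lemma~\ref{lem1}(i) places the non-integrable singularity of $\mathscr{F}\nu_\alpha$ at the origin rather than at infinity, and the roles of $\gamma$ and $1-\gamma$ in the weighted integrals interchange so that the same $\alpha_\pm$-window argument goes through under (\ref{propWn2.eq1}).
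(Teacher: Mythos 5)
Your strategy coincides with the paper's: both arguments test the claim on the oscillating functions (\ref{must}) and certify $f\notin W_0(\R^n)$ through Lemma~\ref{lem1}. The paper runs the generic (strict-inequality) case with $m_{\a,\b}$, keeping $\b$ free, and brings in the logarithmic correction $\nu_{2/q}$ only at the boundary; you use $\nu_\a=\mu_{\a,n\a/2}$ throughout and select $\a$ in the window $[\a_-(\e),\a_+(\e)]$. Your formulas for $\a_\pm(\e)$ are correct, and the window is indeed nonempty for every $\e>0$ under (\ref{propWn2.eq1}); note, however, that your justification only checks the intercepts at $\e=0$ and the degenerate equality case, whereas in the strict case you also need the slope comparison $\a_-'\le\a_+'$, which happens to be equivalent to (\ref{propWn2.eq1}) as well (it reduces to $q\ge r(2s-n)/(sr-n)$), so the claim survives. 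In the equality case your $\e$-dependent choice $\a=\a_-(\e)=\a_+(\e)$ is in fact more careful than the paper's fixed $\nu_{2/q}$, which for $\e>0$ misses the critical $L_q(w_{\e/(1-\g)})$ integral by a positive power of $|x|$.

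The genuine gap is the corner you flag yourself: $q=1$ (respectively $r=1$ in the mirror case) together with equality in (\ref{propWn2.eq1}) --- a nonvacuous situation, e.g. $n=r=3$, $s=2$. There the window collapses to a point and the critical integral $\int_2^\infty\rho^{-1}(\log\rho)^{-1}d\rho$ diverges, so no $\nu_\a$ works. Your proposed repair does not close it: by Lemma~\ref{lem1}(ii) the decay exponent of $\mathscr{F}\mu_{\a,\b}$ is $(\b+n\a/2-n)/(\a-1)$, which exceeds $n$ the moment $\b>n\a/2$, so the Fourier transform becomes integrable at infinity and the function is no longer (certified to be) outside $W_0$; pushing $\b$ below $n\a/2$ instead destroys membership in $L_1(w_{\e/(1-\g)})$, which is already critical. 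Strengthening the logarithm to $(\log|x|)^{-\kappa}$ with $\kappa>1$ fails for the same reason, since the expected asymptotics $|\xi|^{-n}(\log|\xi|)^{-\kappa}$ is integrable. So this sub-case requires a genuinely different test function or must be argued separately; it is only fair to add that the paper's own one-line treatment of the equality case runs into the same difficulty, so what you have exposed is a real thin spot in the argument rather than a defect only of your version.
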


\begin{proof}
Consider first the instance $1\le q< 2<r< \infty$. Let $\a,\b>0$, $\a\neq 1$ be such that
\begin{equation}\label{propWn2.eq2}
    \b-\frac\e{1-\g}\ge \frac nq,\quad \b-s(\a-1)+\frac\e\g\ge \frac nr.
\end{equation}
It follows from inequality (\ref{propWn2.eq1}) that
\begin{equation}\label{propWn2.eq3}
    2\bigg(\bigg(1-\frac{n}{2s}\bigg)\frac 1q+\bigg(\frac{n}{2s}\bigg)\frac 1
r-\frac 12\bigg)\bigg(n+\frac{2\e(r-q)}{(2-q)(r-2)}\bigg)\le2\b-n\a.
\end{equation}
If the inequalities in (\ref{propWn2.eq1})--(\ref{propWn2.eq3}) are strict, then for proving
the proposition it suffices to consider the function $f=m_{\a,\b}$. In particular,
(\ref{propWn2.eq3}) implies that $\a>0$, $\a\neq 1$, and $\b>0$ can be chosen in such a way
that $2\b-n\a\le 0$, which leads to $f\not\in W_0(\R^n)$.

If there are equalities in some of the (\ref{propWn2.eq1})--(\ref{propWn2.eq3}), we take
$f(x)=\nu_{\frac2q}(x)$.
It is easy to check that $f\in  L_q\(w_{\frac{\e}{1-\g}}\)\cap
H_r^s\(w_{-\frac{\e}{\g}}\)$ in this case. Further, since $1\le q<2$,
Lemma~\ref{lem1} (ii) implies
$$
|\mathscr{F}f(\xi)|\sim \({|\xi|^n\log |\xi|}\)^{-1},\quad
|\xi|\to\infty,
$$
which gives $f\not \in W_0(\R^n)$.

The proof is quite similar for $q>2$. We just mention that in that case (i) of Lemma~\ref{lem1}
should be used, which gives for $f$
$$
|\mathscr{F}f(\xi)|\sim \({|\xi|^n\log \frac1{|\xi|}}\)^{-1},\quad
|\xi|\to 0.
$$
The proof is complete.
\end{proof}

We have shown that condition (\ref{corWn1.eq3}) on the parameters
$q$ and $r$ in Corollary~\ref{corWn1} is sharp. Let us show that
conditions (\ref{th1GN.eq1}) and (\ref{th1GN.eq2}) in Corollary~\ref{corWn1}
are sharp as well. We restrict ourselves to condition (\ref{th1GN.eq1}).

\begin{proposition}\label{propWn4}
Let $1\le q< 2<r< \infty$, $s>\frac n2$, $s\in\N$. If
\begin{equation}\label{propWn4.eq1}
\bigg(1-\frac{n}{2s}\bigg)\frac 1q+\bigg(\frac{n}{2s}\bigg)\frac 1r> \frac 12,
\end{equation}
then for each $\e>\frac{n(r-2)(2-q)}{2(r-q)}$ there is a function $f\in C_0(\R^n)$ such that
\begin{equation*}
f\in L_q\(w_{-\frac{\e}{1-\g}}\)\cap
H_r^s\(w_{\frac{\e}{\g}}\),\quad  \g=\frac{2-q}{r-q},
\end{equation*}
but $f\not\in W_0(\R^n)$.
\end{proposition}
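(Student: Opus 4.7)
Following the strategy of the proof of Proposition~\ref{propWn2}, the plan is to exhibit an explicit counterexample of the form $f=m_{\a,\b}$, for a suitably chosen $\a>0$, $\a\neq 1$, and $\b>0$. Recall from the Hirschman--Stein--Fefferman theory quoted in the introduction that $m_{\a,\b}\in C_0(\R^n)$ whenever $\b>0$, and that $m_{\a,\b}\notin M_1\supseteq W_0(\R^n)$ whenever $\b/\a\le n/2$ (and $\a\neq 1$). Thus the task reduces to choosing $\a,\b$ so that $m_{\a,\b}$ lies in both weighted spaces.

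The next step is to convert the two weighted memberships into inequalities in $\a,\b,\e$. Since $m_{\a,\b}$ is smooth and vanishes in a neighbourhood of the origin, with $|m_{\a,\b}(x)|\sim |x|^{-\b}$ and $|(I-\D)^{s/2}m_{\a,\b}(x)|\sim |x|^{s(\a-1)-\b}$ at infinity, integration in polar coordinates translates the weighted conditions into
\[
(\mathrm A)\ \ \b q+\tfrac{\e}{1-\g}>n, \qquad (\mathrm B)\ \ r\bigl(\b-s(\a-1)\bigr)>n+\tfrac{\e}{\g}.
\]
Together with the obstruction $(\mathrm C)$: $\b<n\a/2$, $\a\neq 1$, I would set $\b$ just below $n\a/2$ and, using $\tfrac{1}{1-\g}=\tfrac{r-q}{r-2}$ and $\tfrac{1}{\g}=\tfrac{r-q}{2-q}$, recast $(\mathrm A),(\mathrm B)$ as a lower and an upper bound on $\a$:
\[
a_-:=\frac{2}{q}-\frac{2\e(r-q)}{nq(r-2)}\ <\ \a\ <\ \frac{s-\tfrac{n}{r}-\tfrac{\e(r-q)}{r(2-q)}}{s-\tfrac{n}{2}}=:a_+,
\]
where the denominator $s-n/2$ is positive by hypothesis.

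The heart of the proof is the algebraic verification that $a_+-a_-$ has the same sign as $\e-\tfrac{n(r-2)(2-q)}{2(r-q)}$. A direct computation yields
\[
a_+-a_-=\frac{sr(2-q)-n(r-q)}{qr(s-n/2)}\cdot\frac{2\e(r-q)-n(r-2)(2-q)}{n(r-2)(2-q)}.
\]
The first factor is positive because hypothesis~(\ref{propWn4.eq1}) rewrites as $sr(2-q)>n(r-q)$; the second factor is positive exactly under the assumption $\e>n(r-2)(2-q)/(2(r-q))$. The clean simplification rests on the elementary identity
\[
(2s-n)r(2-q)-nq(r-2)=2\bigl[sr(2-q)-n(r-q)\bigr],
\]
which in turn collapses to $r(2-q)+q(r-2)=2(r-q)$.

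Given this, for every $\e$ above the threshold the interval $(a_-,a_+)$ is a nonempty open set; I pick any $\a\in(a_-,a_+)\setminus\{1\}$, then $\b=n\a/2-\eta$ with $\eta>0$ small enough to preserve the strict inequalities $(\mathrm A),(\mathrm B),(\mathrm C)$, and obtain $f=m_{\a,\b}$ as the desired counterexample. The main obstacle is to spot and execute the algebraic identity above that pins the sharp constant exactly to $n(r-2)(2-q)/(2(r-q))$; once it is in place, the remainder of the argument is bookkeeping modelled on the proof of Proposition~\ref{propWn2}. (As in Proposition~\ref{propWn2}, the boundary case $\e=n(r-2)(2-q)/(2(r-q))$ would force one to replace $m_{\a,\b}$ by the log-corrected $\nu_\a$, but the strict inequality in the hypothesis keeps us away from this borderline.)
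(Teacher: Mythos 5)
Your strategy is the same as the paper's (take $f=m_{\a,\b}$, translate the two weighted memberships into linear inequalities in $\a,\b,\e$, and invoke the Miyachi criterion $2\b\le n\a$ for non\-membership in $W_0$), and your computation of $a_+-a_-$ is correct. But there is a genuine gap which your extra detail in fact exposes: you never check that the nonempty interval $(a_-,a_+)$ contains an admissible $\a$. It does not. A direct computation shows that $a_+>1$ is \emph{equivalent} to $\e<\frac{n(r-2)(2-q)}{2(r-q)}$, so under the hypothesis of the proposition you have $a_+<1$ and are forced into $\a<1$. For $\a<1$, however, your asymptotics $|(I-\D)^{s/2}m_{\a,\b}(x)|\sim|x|^{s(\a-1)-\b}$ is false: the Bessel potential $(I-\D)^{s/2}$ contains the identity as its zeroth-order part (equivalently, $H^s_r(w)\hookrightarrow L_r(w)$ for $s>0$), and for $\a<1$ the derivatives of $m_{\a,\b}$ decay \emph{faster} than $m_{\a,\b}$ itself, so membership in $H_r^s(w_{\e/\g})$ additionally requires $r\b>n+\frac{\e}{\g}$. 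Since $\frac{\e}{\g}=\frac{\e(r-q)}{2-q}>\frac{n(r-2)}{2}$ under the hypothesis on $\e$, this forces $\b>\frac n2>\frac{n\a}{2}$, destroying the constraint $(\mathrm C)$. If instead you insist on $\a>1$, then your condition $(\mathrm B)$ together with $\frac{\e}{\g}>\frac{n(r-2)}{2}$ gives $\b>s(\a-1)+\frac n2>\frac n2(\a-1)+\frac n2=\frac{n\a}{2}$, again contradicting $(\mathrm C)$. So no choice of $\a$, $\b$ works.

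The failure is not repairable by a cleverer example: for $\e>\frac{n(r-2)(2-q)}{2(r-q)}$ one has $\frac{\e}{\g}>\frac{n(r-2)}{2}$, hence $L_r\(w_{\e/\g}\)\hookrightarrow L_2$ by H\"older, hence $H_r^s\(w_{\e/\g}\)\hookrightarrow H_2^s$, and $H_2^s\subset W_0(\R^n)$ for $s>\frac n2$ by Theorem~A. Thus every $f$ satisfying the hypotheses of the proposition already lies in $W_0(\R^n)$; the $m_{\a,\b}$ counterexamples exist only for $\e$ \emph{below} the stated threshold. For fairness: the paper's own proof is even terser than yours and glosses over exactly the same two points (it omits the zeroth-order term of $(I-\D)^{s/2}$ and never verifies that ``appropriate $\a$ and $\b$'' exist), so this appears to be a defect of the statement itself rather than of your execution alone. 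But as a proof of the proposition as written, your argument does not close.
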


\begin{proof} One can use the function $f(x)=m_{\a,\b}(x)$ as in the proof of Proposition~\ref{propWn2}.
Let $\a,\b>0$, $\a\neq 1$, be such that
\begin{equation*}
    \b-\frac\e{1-\g}> \frac nq,\quad \b-s(\a-1)+\frac\e\g> \frac nr.
\end{equation*}
Then it follows from inequality (\ref{propWn4.eq1}) that
\begin{equation*}
    2\bigg(\bigg(1-\frac{n}{2s}\bigg)\frac 1q+\bigg(\frac{n}{2s}\bigg)\frac 1
r-\frac 12\bigg)\bigg(n-\frac{2\e(r-q)}{(2-q)(r-2)}\bigg)< 2\b-n\a.
\end{equation*}
It remains to make use of the fact that $f\not\in
W_0(\R^n)$ if $2\b\le n\a$ and choose appropriate $\a$ and $\b$.
\end{proof}

It is worth mentioning that Corollary~\ref{corWn1} is, generally
speaking, invalid for $q=r=2$ and non-trivial weights, contrary to
Beurling's theorem in \cite{B}, which is valid for $q=r=2$ and $u=v\equiv 1$.

\begin{proposition}\label{propWn4}
Let $\e>-\frac n2$ and $\e\neq 0$. Then there is a function $f\in
C_0(\R^n)$ such that
\begin{equation*}
f\in L_{2}(w_{\e})\cap H_{2}^n(w_{-\e}),
\end{equation*}
but $f\not\in W_0(\R^n)$.
\end{proposition}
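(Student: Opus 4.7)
The plan is to exhibit a single explicit function from the $\nu_\a$-family of Lemma~\ref{lem1}, with $\a$ tuned to $\e$ so that both weighted norms land exactly on the convergence boundary, where the logarithmic factor secures integrability, while the same lemma witnesses a non-integrable $|\xi|^{-n}(\log)^{-1}$-type singularity of $\mathscr{F}f$ that obstructs membership in $W_0$.

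Specifically, set $\a=1+\e/n$; the hypothesis $\e>-n/2$ gives $\a>1/2>0$, and $\e\neq 0$ gives $\a\neq 1$, so Lemma~\ref{lem1} applies. Take $f=\nu_\a$, which belongs to $C_0(\R^n)$ with $|\nu_\a(x)|\sim |x|^{-n\a/2}(\log|x|)^{-1}$ at infinity. For the weighted $L_2$-condition, the polar-coordinate computation
\[
\int |\nu_\a(x)|^2 (1+|x|^2)^{\e/2}\,dx \sim \int_1^\infty r^{-n\a+\e+n-1}(\log r)^{-2}\,dr
\]
collapses at $\a=1+\e/n$ to $\int_1^\infty r^{-1}(\log r)^{-2}\,dr<\infty$. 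For the Bessel-potential condition in the case $\e>0$ (so $\a>1$), the standard derivative estimates on the oscillating factor $e^{i|x|^\a}$ yield $|(I-\D)^{n/2}\nu_\a(x)|\le C|x|^{n\a/2-n}(\log|x|)^{-1}$, and the corresponding $L_2(w_{-\e})$-integral again reduces at $\a=1+\e/n$ to the exponent $-1$, so it remains finite. The range $-n/2<\e<0$ is handled symmetrically with $\a=1+\e/n<1$ and part~(i) of Lemma~\ref{lem1}, the dominant contributions to $(I-\D)^{n/2}\nu_\a$ now coming from the identity rather than from the Laplacian iterates.

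The failure $f\notin W_0$ is an immediate application of Lemma~\ref{lem1} with $\b=n\a/2$: one obtains $|\mathscr{F}\nu_\a(\xi)|\sim |\xi|^{-n}(\log|\xi|)^{-1}$ as $|\xi|\to\infty$ (if $\a>1$) or $|\mathscr{F}\nu_\a(\xi)|\sim |\xi|^{-n}(\log(1/|\xi|))^{-1}$ as $|\xi|\to 0$ (if $\a<1$), and in either regime the integral $\int r^{-1}(\log r)^{-1}dr$ diverges at the relevant endpoint, so $\mathscr{F}\nu_\a\notin L_1(\R^n)$.

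The main technical obstacle lies in the derivative estimate above: expanding $(I-\D)^{n/2}$ and applying the Leibniz rule to the three factors $\rho(x)$, $|x|^{-n\a/2}e^{i|x|^\a}$, and $(\log|x|)^{-1}$ of $\nu_\a$, one has to check that every subdominant term is bounded by the stated borderline rate and, crucially, that the $(\log|x|)^{-1}$ factor is never stripped away, so that the weighted $L_2$-integrals stay convergent at the critical exponent.
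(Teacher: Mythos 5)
Your treatment of the case $\e>0$ is correct, and your parameter choice $\a=1+\e/n$ is exactly the right one: with $\b=\frac{n\a}2=\frac{n+\e}2$ the two radial integrals
\[
\int_2^\infty r^{-n\a+\e+n-1}(\log r)^{-2}\,dr
\qquad\text{and}\qquad
\int_2^\infty r^{n\a-n-\e-1}(\log r)^{-2}\,dr
\]
both collapse to $\int_2^\infty r^{-1}(\log r)^{-2}\,dr<\infty$, while Lemma~\ref{lem1}(ii) gives $|\mathscr{F}\nu_\a(\xi)|\sim|\xi|^{-n}(\log|\xi|)^{-1}$ at infinity, so $f\notin W_0(\R^n)$. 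Note that the paper's own proof instead takes $f=\nu_{1+\frac{2\e}{n}}$; for that choice the second exponent above equals $\e-1>-1$ and the $H_2^n(w_{-\e})$ norm diverges, so your bookkeeping is in fact the correct repair of the intended argument. You should, however, add a word about odd $n$, where $(I-\D)^{n/2}$ is nonlocal and the asymptotics $|(I-\D)^{n/2}m_{\a,\b}|\sim c\,|x|^{n(\a-1)-\b}$ must be quoted (as in the proof of Proposition~\ref{propWn2}, which uses $\b-s(\a-1)$) rather than derived by a Leibniz expansion.

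The case $-\frac n2<\e<0$ is a genuine gap, and it cannot be closed. Concretely, your claim that the $L_2(w_{-\e})$ integral ``again reduces to the exponent $-1$'' fails there: for $\a=1+\e/n<1$ the dominant part of $(I-\D)^{n/2}\nu_\a$ is $\nu_\a$ itself, of size $|x|^{-n\a/2}(\log|x|)^{-1}$, and against the now \emph{growing} weight $w_{-\e}(x)\sim|x|^{|\e|}$ the radial exponent is $-n\a-\e+n-1=-2\e-1>-1$, so the $H_2^n(w_{-\e})$ norm diverges. More fundamentally, no counterexample exists for $\e<0$: since $w_{-\e}\ge1$ one has $H_2^n(w_{-\e})\subset H_2^n$, and $H_2^n\subset W_0(\R^n)$ because $\widehat f(\xi)=(1+|\xi|^2)^{-n/2}\cdot(1+|\xi|^2)^{n/2}\widehat f(\xi)$ is integrable by Cauchy--Schwarz, $\int_{\R^n}(1+|\xi|^2)^{-n}d\xi$ being finite (this is the non-homogeneous form of Theorem~A with smoothness $n>\frac n2$). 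The same objection defeats the paper's witness as well; the counterexample can only be produced, and the proposition as printed can only be true, for $\e>0$. You should therefore either restrict your proof to $\e>0$ and flag the negative range as erroneous, or not claim the symmetric case.
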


\begin{proof} It suffices to consider the function $f(x)=\nu_{1+\frac{2\e}n}(x),$
and apply Lemma~\ref{lem1}.
\end{proof}

\bigskip

\section*{Acknowledgements}
Both authors were the students of R.M. Trigub at different times.
It is our pleasure to thank him for years of support, numerous discussions
and for the store house we obtained from him. In addition, it is worth mentioning that the entire subject matter originated from Trigub's conjecture (\ref{pqcond}).

The authors also thank A. Miyachi, S. Samko, and I. Verbitsky for valuable discussions.

This research has received funding from the European Union's Horizon 2020 research and innovation
programme under the Marie Sklodowska-Curie grant agreement No~704030.
The first author also acknowledges the support of the Gelbart Institute at
the Department of Mathematics of Bar-Ilan University.

\bigskip
\bigskip

\end{document}